\documentclass{article}

\title{Left adjoint to precomposition in elementary doctrines}
\author{Francesca Guffanti\footnote{The author would like to acknowledge her Ph.D. supervisors: Sandra Mantovani and Pino Rosolini. This paper is the fourth chapter of her Ph.D. thesis, defended at Universit`a degli Studi di Milano. The author is also thankful to the anonymous referee for the suggestions that led to this improved version of the manuscript.}}
\date{}

%
\usepackage[utf8]{inputenc}
\usepackage{csquotes}
\usepackage[italian,british]{babel}
\usepackage[margin=89.5pt]{geometry}
%
\usepackage{hyperref}

\hypersetup{
    pdftitle={},
    pdfauthor={},
    pdfmenubar=false,
    pdffitwindow=true,
    pdfstartview=FitH,
    colorlinks=true,
    linkcolor=black,
    citecolor=black,
    urlcolor=cyan
}

%
\usepackage{todonotes}
\usepackage{setspace}
\usepackage{mathtools}
\usepackage{amssymb}
\usepackage{bbold}
\usepackage{amsmath,amsthm, graphicx,xspace}
\usepackage{mathrsfs}
\usepackage{latexsym}
\usepackage{rotating}
\usepackage{multicol}
\usepackage{verbatim}
\usepackage{faktor}
\usepackage{epigraph}
\usepackage{quiver}
\usepackage{esvect}
\usepackage{cancel}
\usepackage{oubraces}
\usepackage{bbm}

%
\usepackage{enumitem}
\setlist[itemize]{parsep=0pt}
\setlist[enumerate]{parsep=0pt}
%
\usepackage{tikz-cd}
\usepackage{tikz}
\usetikzlibrary{arrows}

\numberwithin{equation}{section}
%
%
\linespread{1.2} 

%

\theoremstyle{definition}
\newtheorem{theorem}{Theorem}[section]
\newtheorem{proposition}[theorem]{Proposition}
\newtheorem{lemma}[theorem]{Lemma}
\newtheorem{claim}[theorem]{Claim}

\newtheorem{definition}[theorem]{Definition}

\newtheorem{example}[theorem]{Example}

%
\mathcode`\:="603A 
\mathchardef\colon="303A 

%
\def\pws{\mathop{\mathscr{P}\kern-1.1ex_{\ast}}}
\DeclareMathOperator{\OPRid}{id}
\newcommand{\id}[1]{\ensuremath{\OPRid_{#1}}}
\newcommand{\ct}[1]{\ensuremath{\mathbb{#1}}\xspace}
\newcommand{\CC}{\ct{C}}
\newcommand{\Pos}{\mathbf{Pos}}
\newcommand{\ple}[1]{\langle#1\rangle}
\newcommand{\op}{^{\mathrm{op}}}
\newcommand{\blank}{\mathrm{-}}
\newcommand{\pr}[1]{\ensuremath{\mathrm{pr}_{#1}}}
\newcommand{\tmn}{\mathbf{t}}
\newcommand{\ED}{\mathbf{ED}}
\newcommand{\QED}{\mathbf{QED}}
\newcommand{\Sub}{\mathop{\mathrm{Sub}}}
\newcommand{\Lan}{\mathop{\mathrm{Lan}}}
\newcommand{\Set}{\textnormal{Set}}

\newcommand{\des}{\mathscr{D}es}
\newcommand{\dom}{\mathop{\mathrm{dom}}}
\newcommand{\Mod}{\mathop{\mathrm{Mod}}}
\newcommand{\fbf}{\mathtt{LT}}
\newcommand{\efbf}{\mathtt{HF}}
\newcommand{\ctx}{\mathrm{\mathbb{C}tx}}
\newcommand{\pointinproof}[1]{\newline{\bf#1:}}
\begin{document}
\newlength{\myindent} 
\setlength{\myindent}{\parindent}
\parindent 0em 

\maketitle
\begin{abstract}
It is well-known in universal algebra that adding structure and equational axioms generates forgetful functors between varieties, and such functors all have left adjoints. The category of elementary doctrines provides a natural framework for studying algebraic theories, since each algebraic theory can be described by some syntactic doctrine and its models are homomorphism from the syntactic doctrine into the doctrine of subsets. In this context, adding structure and axioms to a theory can be described by a homomorphism between the two corresponding syntactic doctrines, and the forgetful functor arises as precomposition with this last homomorphism. In this work, given any homomorphism of elementary doctrines, we prove the existence of a left adjoint of the functor induced by precomposition in the doctrine of subobjects of a Grothendieck topos.
\end{abstract}
\section{Introduction} 
In universal algebra, adding structure or equational axioms is a widely used technique: classical results say that for a given category of algebraic structure---e.g.\ monoids---, adding some structure or axioms---e.g.\ groups, commutative monoids---defines a forgetful functor from the new category to the original one, with a left adjoint. The 2-category $\ED$ of elementary doctrines provides a natural framework for studying algebraic theories, with each theory $\mathbb{T}$ for a particular algebraic language ${\Sigma}$ described by some doctrine of formulae $\efbf^{\Sigma}_\mathbb{T}$.
From a categorical point of view, every variety is equivalent to a category of homomorphisms of elementary doctrines $\ED(\efbf^{\Sigma}_\mathbb{T},\pws)$ between a doctrine of formulae and the subsets doctrine $\pws :\Set_{\ast}\op\to\Pos$. Moreover, adding structure and equational axioms translates to a doctrine homomorphism $(E,\mathfrak e):\efbf^{\Sigma}_\mathbb{T}\to\efbf^{\Sigma'}_\mathbb{T'}$. Precomposition with this homomorphism induces a functor $\blank\circ(E,\mathfrak e):\ED(\efbf^{\Sigma'}_\mathbb{T'},\pws)\to\ED(\efbf^{\Sigma}_\mathbb{T},\pws)$, and it represents the forgetful functor between the correspondent varieties, hence it has a left adjoint.
\[\begin{tikzcd}
	{\Mod^{\Sigma'}_{\mathbb T'}} & {\Mod^{\Sigma}_{\mathbb T}} \\
	{\ED(\efbf^{\Sigma'}_{\mathbb{T'}},\pws )} & {\ED(\efbf^{\Sigma}_{\mathbb{T}},\pws )}
	\arrow["\rotatebox{-90}{$\cong$}"{description}, draw=none, from=2-1, to=1-1]
	\arrow[from=1-1, to=1-2]
	\arrow["{\blank\circ(E,\mathfrak{e})}", from=2-1, to=2-2]
	\arrow["\rotatebox{-90}{$\cong$}"{description}, draw=none, from=2-2, to=1-2]
\end{tikzcd}\]
In this work we extend this classical result in $\ED$ by considering the subobject doctrine $\Sub:\mathbb{E}\op\to\Pos$ from a Grothendieck topos $\mathbb{E}$ instead of the doctrine of subsets, and precomposition with any homomorphism $(F,\mathfrak f):P\to R$ instead of the forgetful functor. The whole article is dedicated to the proof that also in this case the functor
\begin{equation*}\blank\circ(F,\mathfrak f):\ED(R,\Sub)\to\ED(P,\Sub)\end{equation*}
has a left adjoint, showing how the existence of free functors in universal algebra follows from a more general result that lives in the theory of elementary doctrines.
\section{Elementary doctrines}
We start by recalling the language of elementary doctrines introduced in \cite{quotcomplfoun,unifying,triposes}, as a generalization of Lawvere's hyperdoctrine \cite{adjfound,lawdiag,lawequality}. While in general, doctrines are a way to generalize the posets of well-formed formulae ordered by provable consequence, the particular context of elementary doctrines is a suitable one in which we can interpret conjunctions and equality of formulae. We define the 2-category $\ED$ of elementary doctrines, which will be the main protagonists of this work, and show some relevant examples.
\nocite{borhandbk,elephant,CatWorMat}
\begin{definition}
Let $\ct{C}$ be a category with finite products and let $\Pos$ be the category of partially-ordered sets and monotone functions. A \emph{primary doctrine} is a functor $P:\ct{C}\op\to\Pos$ such that for each object $A$ in $\CC$, the poset $P(A)$ has finite meets, and the related operations $\land:P\times P\xrightarrow{\cdot} P$ and $\top:\mathbf{1}\xrightarrow{\cdot} P$ yield natural transformations. The category $\CC$ is called \emph{base category of $P$}, each poset $P(X)$ for an object $X\in\CC$ is called \emph{fiber}, the function $P(f)$ for an arrow $f$ in $\CC$ is called \emph{reindexing}.
\end{definition}
We recall the definition of elementary doctrines, which are---informally speaking---doctrines in which we can interpret equality. The original definition of equality for hyperdoctrines was given by Lawvere in \cite{lawequality}: for any object $X$ in the base category the equality is defined as $\Sigma_{\Delta_X}(\top_X)$, where $\Sigma_{\Delta_X}:P(X)\to P(X\times X)$ is the left adjoint of the reindexing $P(\Delta_X):P(X\times X)\to P(X)$ of the usual diagonal map $\Delta_X:X\to X\times X$. Since Lawvere's definition of doctrines with equality, there have been many equivalent definitions up to date---some can be found in \cite{ElemQuotCompl,quotcomplfoun}. The one we chose to deal with in this work is taken from the characterization in Proposition 2.5 of \cite{EmPaRo}.
\begin{definition}
A primary doctrine $P:\ct{C}\op\to\Pos$ is \emph{elementary} if for any object $A$ in $\ct{C}$ there exists an element $\delta_A\in P(A\times A)$ such that:
\begin{enumerate}
\item $\top_A\leq P(\Delta_A)(\delta_A)$;
\item $P(A)=\des_{\delta_A}:=\{\alpha\in P(A)\mid P(\pr1)(\alpha)\land\delta_A\leq P(\pr2)(\alpha)\}$;
\item $\delta_A\boxtimes\delta_B\leq\delta_{A\times B}$, where $\delta_A\boxtimes\delta_B=P(\ple{\pr1,\pr3})(\delta_A)\land P(\ple{\pr2,\pr4})(\delta_B)$.
\end{enumerate}
In 2., $\pr1$ and $\pr2$ are the projections from $A\times A$ in $A$; in 3., the projections are from $A\times B\times A\times B$. The element $\delta_A$ will be called \emph{fibered equality} on $A$.
\end{definition}
\begin{example}\label{ex:doctr}
\begin{enumerate}
\item The functor $\mathscr{P}:\Set\op\to\Pos$, sending each set in the poset of its subsets, ordered by inclusion, and each function $f:A\to B$ to the inverse image $f^{-1}:\mathscr{P}(B)\to\mathscr{P}(A)$ is an elementary doctrine. For any set $A$, intersection of two subsets is their meet, $A$ is the top element, the 
subset $\{(a,a)\mid a \in A\}\subseteq A\times A$ is the fibered equality on $A$.
\item\label{item:ex:doctr-2} For a given category $\CC$ with finite limits, the functor $\Sub_\CC:\CC\op\to\Pos$ sending each object to the poset of its subobjects in $\CC$ and each arrow $f:A\to B$ to the pullback function $f^*:\Sub_\CC(B)\to\Sub_\CC(A)$, is an elementary doctrine.
For any object $A$ in $\CC$, the pullback of a subobject along another defines their meet.
\[\begin{tikzcd}
	{\dom(\alpha\land\beta)} & \dom\alpha \\
	\dom\beta & A
	\arrow["{\pi_1}"', tail, from=1-1, to=2-1]
	\arrow["{\pi_2}", tail, from=1-1, to=1-2]
	\arrow["\alpha", tail, from=1-2, to=2-2]
	\arrow["\beta"', tail, from=2-1, to=2-2]
	\arrow["\alpha\land\beta"{description}, tail, from=1-1, to=2-2]
\end{tikzcd}\]
The arrow $\id{A}$ is the top element. The usual diagonal map $\Delta_A:A\rightarrowtail A\times A$ is the fibered equality on $A$---see in \cite{ElemQuotCompl} the Example 2.4.a.
\item For a given theory $\mathcal{T}$ on a one-sorted first-order language $\mathcal{L}$ with equality, let $\ctx_\mathcal{L}$ be the category of contexts: an object is a finite list of distinct variables, and an arrow between two lists $\vec x=(x_1,\dots, x_n)$ and $\vec y=(y_1,\dots, y_m)$ is
\begin{equation*}(t_1(\vec x),\dots,t_m(\vec x)):(x_1,\dots, x_n) \to(y_1,\dots, y_m)\end{equation*}
an $m$-tuple of terms in the context $\vec x$. The functor $\fbf^\mathcal{L}_{\mathcal{T}}:\ctx_\mathcal{L}\op\to\Pos$ sends each list of variables to the poset reflection of well-formed formulae written with at most those variables ordered by provable consequence in $\mathcal{T}$; moreover, $\fbf^\mathcal{L}_{\mathcal{T}}:\ctx_\mathcal{L}\op\to\Pos$ sends an arrow $\vec{t}(\vec{x}):\vec{x}\to\vec{y}$ into the substitution $[\vec{t}(\vec{x})/\vec{y}]$. For any list $\vec x$, the conjunction of two formulae is their binary meet, the true constant $\top$ is the top element, the formula $\big(x_1=x_1'\land\dots\land x_n=x_n'\big)$ in $\fbf^\mathcal{L}_\mathcal{T}(\vec x;\vec x')$ is the fibered equality on $\vec{x}$. The functor $\fbf^\mathcal{L}_{\mathcal{T}}$ is an elementary doctrine.
\item For a given category $\ct{D}$ with finite products and weak pullbacks, the functor of weak subobjects $\mathbf{\Psi}_\ct{D}:\ct{D}\op\to\Pos$ sending each object $A$ to the poset reflection of the comma category $\ct{D}/A$ is an elementary doctrine: for each arrow $f:A\to B$, $\mathbf{\Psi}_\ct{D}(f)$ sends the equivalence class of an arrow $\alpha:\dom\alpha\to B$ to the equivalence class of the projection $\pi_1$ of a chosen weak pullback of $\alpha$ along $f$---see Example 2.9 in \cite{quotcomplfoun} for more details.
\[\begin{tikzcd}
	W & \dom\alpha \\
	A & B
	\arrow["{\pi_1}"', from=1-1, to=2-1]
	\arrow["{\pi_2}", from=1-1, to=1-2]
	\arrow["\alpha", from=1-2, to=2-2]
	\arrow["f"', from=2-1, to=2-2]
\end{tikzcd}\]
For any object $A$ in $\ct{D}$, a choice of a weak pullback of a representative of a weak subobject along another defines their meet.
\[\begin{tikzcd}
	{\dom(\alpha\land\beta)} & \dom\alpha \\
	\dom\beta & A
	\arrow["{\pi_1}"', from=1-1, to=2-1]
	\arrow["{\pi_2}", from=1-1, to=1-2]
	\arrow["\alpha", from=1-2, to=2-2]
	\arrow["\beta"', from=2-1, to=2-2]
	\arrow["\alpha\land\beta"{description}, from=1-1, to=2-2]
\end{tikzcd}\]
The class of $\id{A}$ is the top element. The equivalence class of the usual diagonal map $\Delta_A:A\rightarrowtail A\times A$ is the fibered equality on $A$.
\end{enumerate}
\end{example}
\begin{definition}
An \emph{elementary doctrine homomorphism}---1-cell or 1-arrow---between two elementary doctrines $P:\ct{C}\op\to\Pos$ and $R:\ct{D}\op\to\Pos$ is a pair $(F,\mathfrak f)$ where $F:\ct{C}\to\ct{D}$ is a functor that preserves finite products and $\mathfrak{f}:P\xrightarrow{\cdot}R\circ F\op$ is a natural transformation preserving finite meets and the fibered equality, i.e.\
\begin{equation*}\mathfrak{f}_A(\alpha\land_A\alpha')=\mathfrak{f}_A(\alpha)\land_{FA}\mathfrak{f}(\alpha');\qquad\qquad\mathfrak{f}_A(\top_A)=\top_{FA};\qquad\qquad\mathfrak{f}_{A\times A}(\delta_A)=\delta_{FA}.\end{equation*}
Sometimes a homomorphism between $P$ and $R$ will be called a model of $P$ in $R$.
A \emph{2-cell} between $(F,\mathfrak f)$ and $(G,\mathfrak g)$ from $P$ to $R$ is a natural transformation $\theta:F\xrightarrow{\cdot} G$ such that $\mathfrak{f}_A(\alpha)\leq R(\theta_A)(\mathfrak{g}_A(\alpha))$ for any object $A$ in $\ct{C}$ and $\alpha\in P(A)$. Elementary doctrines, elementary doctrine homomorphisms with 2-cells defined here form a $2$-category, that will be denoted $\ED$.
\end{definition}
\[\begin{tikzcd}
	{\ct{C}\op} && {\ct{D}\op} && {\ct{C}\op} && {\ct{D}\op} \\
	& \Pos \\
	&&&&& \Pos
	\arrow["F\op", from=1-1, to=1-3]
	\arrow[""{name=0, anchor=center, inner sep=0}, "P"', from=1-1, to=2-2]
	\arrow[""{name=1, anchor=center, inner sep=0}, "R", from=1-3, to=2-2]
	\arrow[""{name=2, anchor=center, inner sep=0}, "F\op"{description}, curve={height=-12pt}, from=1-5, to=1-7]
	\arrow[""{name=3, anchor=center, inner sep=0}, "P"', from=1-5, to=3-6]
	\arrow[""{name=4, anchor=center, inner sep=0}, "R", from=1-7, to=3-6]
	\arrow[""{name=5, anchor=center, inner sep=0}, "G\op"{description}, curve={height=12pt}, from=1-5, to=1-7]
	\arrow["{\mathfrak{f}}"', curve={height=-6pt}, shorten <=8pt, shorten >=8pt, from=0, to=1]
	\arrow["{\mathfrak f}"{description}, curve={height=-6pt}, shorten <=8pt, shorten >=8pt, from=3, to=4]
	\arrow["{\mathfrak g}"{description}, curve={height=6pt}, shorten <=8pt, shorten >=8pt, from=3, to=4]
	\arrow["\theta\op", shorten <=3pt, shorten >=3pt, Rightarrow, from=5, to=2]
\end{tikzcd}\]
\begin{example}
For a given category $\CC$ with finite limits, the inclusion of $\Sub_\CC(A)$ into the poset reflection of $\CC/A$ yields a natural transformation $\Sub_\CC\to\mathbf{\Psi}_\CC$; pairing it with the identity on the base category $\CC$, this defines a 1-arrow in $\ED$.
\end{example}
\section{The definition of the left adjoint functor}
Fix in the category $\ED$ of elementary doctrines a homomorphism $(F,\mathfrak{f})$ between two elementary doctrines $P$ and $R$: 
\begin{center}
\begin{tikzcd}
\CC\op\arrow[rr,"F\op"] \arrow[dr,"P"' ,""{name=L}]&&\mathbb{D}\op\arrow[dl,"R" ,""'{name=R}]\\
&\Pos\arrow[rightarrow,"\mathfrak{f}","\cdot"', from=L, to=R, bend left=10]
\end{tikzcd}
\end{center}
where $F:\CC\to\mathbb{D}$ is a product preserving functor, $\mathfrak{f}:P\xrightarrow{\cdot} RF\op$ is a natural transformation that preserves meets, top element and the elementary structure. Moreover, suppose that $\CC$ is small.
Consider a Grothendieck topos $\mathbb{E}$, and the associated subobjects doctrine $\Sub:\mathbb{E}\op\to\Pos$, which is elementary---indeed, it is enough to ask for a finitely complete base category, see Example \ref{ex:doctr}\eqref{item:ex:doctr-2}. Trivially we can precompose any homomorphism $(K,\mathfrak k):R\to\Sub$ in $\ED$ with $(F,\mathfrak f)$ to obtain a homomorphism $(K,\mathfrak k)(F,\mathfrak f):P\to\Sub$; this gives a functor
\begin{equation*}\blank\circ(F,\mathfrak f):\ED(R,\Sub)\to\ED(P,\Sub).\end{equation*}
We look for a left adjoint for this precomposition.
\begin{center}
\begin{tikzcd}
\CC\op\arrow[dr,"F\op"]\arrow[ddddr,"P"'{name=f}, bend right,""'{name=a, near end}]\arrow[rr,"H\op"]&&\mathbb{E}\op\arrow[ddddl,"\Sub"{name=g}, bend left,""{name=e, near end}]\\
&\mathbb{D}\op\arrow[ddd,"R", ""{name=b, near end},""'{name=c, near end}]\arrow[ur,dashed]\\ \\ \\
&\Pos
\arrow[from=a, to=b, bend left, shorten=2mm, "\mathfrak{f}" description]
\arrow[from=c, to=e, bend left, shorten=2mm, dashed]
\arrow[from=f, to=g, bend left=20, shorten=2mm, "\mathfrak{h}"{description, near start}, crossing over]
\end{tikzcd}
\end{center}
The whole section is devoted to the proof of the following:
\begin{theorem}\label{thm:left_adj_prec}
Let $(F,\mathfrak f):P\to R$ be a homomorphism in $\ED$, and suppose the base category of $P$ to be small. Moreover let $\ct{E}$ be a Grothendieck topos and $\Sub:\ct{E}\op\to\Pos$ be the subobject doctrine. Then, the functor induced by precomposition
\begin{equation*}\blank\circ(F,\mathfrak f):\ED(R,\Sub)\to\ED(P,\Sub)\end{equation*}
has a left adjoint.
\end{theorem}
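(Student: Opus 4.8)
The plan is to exhibit the left adjoint through its universal arrows: by the standard characterisation of adjunctions it suffices to produce, for every homomorphism $(H,\mathfrak h):P\to\Sub$, a homomorphism $(K,\mathfrak k):R\to\Sub$ together with a 2-cell $\eta:(H,\mathfrak h)\Rightarrow(K,\mathfrak k)\circ(F,\mathfrak f)$ in $\ED(P,\Sub)$ which is universal among such data. Concretely, for any $(T,\mathfrak t):R\to\Sub$ I must show that every 2-cell $(H,\mathfrak h)\Rightarrow(T,\mathfrak t)\circ(F,\mathfrak f)$ factors uniquely through $\eta$ along a unique 2-cell $(K,\mathfrak k)\Rightarrow(T,\mathfrak t)$. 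I will build $(K,\mathfrak k)$ in two stages, first its functorial part $K:\mathbb{D}\to\ct{E}$ and then its fibred part $\mathfrak k$, exploiting throughout that $\ct{E}$, being a Grothendieck topos, is cocomplete, cartesian closed, exact, and has complete subobject lattices.

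For the functor $K$ I would take the finite-product-preserving left Kan extension of $H$ along $F$. Since $\CC$ is small and $\ct{E}$ is cocomplete, the pointwise formula $KD=\colim_{F\downarrow D}H$ over the comma category $F\downarrow D$ is available, and it supplies the comparison maps $\eta_C:HC\to KFC$. The delicate point is that $K$ preserve finite products. Here I would use that a Grothendieck topos is cartesian closed, so that $\blank\times X$ preserves all colimits; this rewrites $K(D_1)\times K(D_2)$ as a colimit of the products $HC_1\times HC_2\cong H(C_1\times C_2)$ indexed by $(F\downarrow D_1)\times(F\downarrow D_2)$, and the claim follows once the canonical functor $(F\downarrow D_1)\times(F\downarrow D_2)\to F\downarrow(D_1\times D_2)$ is shown to be cofinal. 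This cofinality is exactly where the hypothesis that $F$ preserves finite products is consumed.

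For the fibred part, fix $D$ in $\mathbb{D}$ and $\beta\in R(D)$. The requirement that $\eta$ be a 2-cell reads $\mathfrak h_C(\alpha)\leq\Sub(\eta_C)\big(\mathfrak k_{FC}(\mathfrak f_C(\alpha))\big)$, equivalently $\exists_{\eta_C}\mathfrak h_C(\alpha)\leq\mathfrak k_{FC}(\mathfrak f_C(\alpha))$, where $\exists_{\eta_C}$ is the direct image left adjoint to the reindexing $\Sub(\eta_C)$ that exists because $\ct{E}$ is a topos. Accordingly I would define $\mathfrak k_D(\beta)$ as the least subobject of $KD$ containing all such forced contributions and closed under reindexing along arrows of $\mathbb{D}$, under finite meets, and under the fibred equalities $\delta$; this least subobject exists because subobject lattices in $\ct{E}$ are complete and, $\CC$ being small, the index of the join is small. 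I then have to verify that $(K,\mathfrak k)$ is a genuine $1$-cell of $\ED$: that $\mathfrak k$ is natural in $D$ (a Beck--Chevalley condition), that it preserves $\top$ and binary meets, and that $\mathfrak k_{D\times D}(\delta_D)=\delta_{KD}$. These verifications rest on the exactness of the topos---stability of colimits and of images under pullback, and effectiveness of epimorphisms.

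It remains to check universality. Given $(T,\mathfrak t):R\to\Sub$ and a 2-cell $(H,\mathfrak h)\Rightarrow(T,\mathfrak t)\circ(F,\mathfrak f)$, the underlying natural transformation $H\Rightarrow TF$ extends uniquely to $K\Rightarrow T$ by the universal property of the product-preserving left Kan extension; the accompanying family of fibre inequalities then holds because $\mathfrak k$ was defined as the least structure-compatible assignment, whereas the pullbacks $\Sub(\theta_D)(\mathfrak t_D(\beta))$ already satisfy all the closure conditions defining it. \textbf{The main obstacle} is the simultaneous control of the two constructions: proving that the finite-product-preserving Kan extension $K$ genuinely preserves products (the cofinality argument above) while, on the very same underlying objects, the generated fibres $\mathfrak k$ both respect the fibred equality and satisfy naturality. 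It is precisely at these interlocking points that the full strength of a Grothendieck topos---cartesian closedness together with exactness and completeness of subobject lattices, rather than mere finite completeness---is indispensable, and the smallness of $\CC$ is what keeps all the colimits and joins legitimately small.
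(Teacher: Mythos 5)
Your first stage is sound and agrees with the paper: the paper also takes the finite\hyph product\hyph preserving left Kan extension $\Lan_FH$ (citing the classical result rather than the cofinality argument you sketch). The fatal gap is in your second stage. You keep $K=\Lan_FH$ as the underlying functor and ask for fibres $\mathfrak k$ with $\mathfrak k_{D\times D}(\delta_D)=\delta_{KD}$; such a $\mathfrak k$ does not exist in general, so no ``least generated family'' can produce it --- the constraints you are closing under are inconsistent. Indeed, suppose $\mathfrak k$ is monotone, natural, preserves $\top$ and the fibred equality. Whenever two arrows $t_1,t_2:D'\to D$ of $\mathbb{D}$ satisfy $\top_{D'}\leq R(\ple{t_1,t_2})(\delta_D)$ (that is, the ``theory'' encoded by $R$ proves $t_1=t_2$), one gets
\begin{equation*}
\top_{KD'}=\mathfrak k_{D'}(\top_{D'})\leq\mathfrak k_{D'}\big(R(\ple{t_1,t_2})(\delta_D)\big)=\ple{Kt_1,Kt_2}^*\big(\mathfrak k_{D\times D}(\delta_D)\big)=\ple{Kt_1,Kt_2}^*(\Delta_{KD}),
\end{equation*}
which forces $Kt_1=Kt_2$ in $\ct{E}$. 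The Kan extension does not satisfy these equations: take $P$ and $R$ to be the Horn doctrines of monoids and of groups, $F$ the evident inclusion of contexts, $\ct{E}=\Set$, and $(H,\mathfrak h)$ a monoid $M$. Then $(\Lan_FH)(x)$ is the set of formal group terms with coefficients in $M$ modulo only the identifications coming from monoid terms (the number of inversion symbols is invariant under these), so the arrows $x\cdot x^{-1}$ and $e$ induce different maps out of $(\Lan_FH)(x)$, although $R$ proves them equal. Hence no $1$-cell structure on $\Lan_FH$ exists, and the value of the left adjoint cannot have $\Lan_FH$ as its underlying functor. (A secondary problem: naturality and equality\hyph preservation are equational constraints, not closure conditions, so a least family closed under your lower bounds would at best be laxly natural.)

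What is missing is a quotienting step, and this is exactly how the paper proceeds. It defines the fibres ``from above'' as $\mathfrak l_D(\gamma)=\bigwedge_{(K,\mathfrak k),\theta}\widehat{\theta}_D^*(\mathfrak k_D(\gamma))$, an infimum over all competing cocones $(K,\mathfrak k):R\to\Sub$ with $\theta:(H,\mathfrak h)\to(K,\mathfrak k)(F,\mathfrak f)$ --- this makes naturality and meet preservation automatic --- and then observes that $\mathfrak l_{D\times D}(\delta_D)$ is only an \emph{equivalence relation} on $(\Lan_FH)(D)$, not the diagonal. It therefore lands in the elementary quotient completion, obtaining a $1$-cell $(\mathscr L,\mathfrak l):R\to(\Sub)_q$ with $\mathscr L D=\big((\Lan_FH)(D),\mathfrak l_{D\times D}(\delta_D)\big)$, and uses that $\Sub$ of a Grothendieck topos lies in $\QED$ --- every equivalence relation has an effective descent quotient; this is where exactness really enters --- to come back down along $(Q,\mathfrak q):(\Sub)_q\to\Sub$. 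The reflection of $(H,\mathfrak h)$ is $(Q,\mathfrak q)(\mathscr L,\mathfrak l)$, whose underlying functor is the quotient of $(\Lan_FH)(D)$ by $\mathfrak l_{D\times D}(\delta_D)$ (in the example above, the actual free group on $M$). Your universality argument needs the corresponding amendment: the transpose $\widehat\theta:\Lan_FH\xrightarrow{\cdot}T$ must be shown to descend to this quotient, which holds precisely because $\mathfrak l_{D\times D}(\delta_D)\leq\widehat\theta_{D\times D}^*(\Delta_{TD})$ by the definition of $\mathfrak l$ as an infimum.
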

We start from a homomorphism $(H,\mathfrak h):P\to\Sub$, our first goal is to find a functor $\mathbb{D}\to\mathbb{E}$. An easy choice is to take the left Kan extension of $H$ along $F$, whose existence is granted by the fact that $\mathbb{E}$ is a Grothendieck topos---since it is cocomplete, see Chapter X in \cite{CatWorMat}. Recall that the left Kan extension comes with a natural transformation $\mu:H\xrightarrow{\cdot}(\Lan_FH) F$ such that for any other functor $K:\mathbb{D}\to\mathbb{E}$ and any other natural transformation $\theta:H\xrightarrow{\cdot} KF$ there exists a unique $\widehat{\theta}:\Lan_FH\xrightarrow{\cdot} K$ making the obvious diagrams commute:
\[\begin{tikzcd}
	\CC && {\mathbb{E}} & \CC && {\mathbb{E}} & \CC && {\mathbb{E}} \\
	& {\mathbb{D}} &&& {\mathbb{D}} &&& {\mathbb{D}}
	\arrow["F"', from=1-1, to=2-2]
	\arrow[""{name=0, anchor=center, inner sep=0}, "H", from=1-1, to=1-3]
	\arrow["F"', from=1-4, to=2-5]
	\arrow[""{name=1, anchor=center, inner sep=0}, "H", from=1-7, to=1-9]
	\arrow["F"', from=1-7, to=2-8]
	\arrow[""{name=2, anchor=center, inner sep=0}, "{\Lan_FH}"{xshift=1.5ex}, shift left=2, from=2-8, to=1-9]
	\arrow[""{name=3, anchor=center, inner sep=0}, "K"', shift right=2, from=2-8, to=1-9]
	\arrow["{\Lan_FH}"', from=2-2, to=1-3]
	\arrow[""{name=4, anchor=center, inner sep=0}, "H", from=1-4, to=1-6]
	\arrow["K"', from=2-5, to=1-6]
	\arrow["\mu", shorten <=5pt, shorten >=3pt, Rightarrow, from=0, to=2-2]
	\arrow["\theta", shorten <=5pt, shorten >=3pt, Rightarrow, from=4, to=2-5]
	\arrow["{\widehat{\theta}}"'{yshift=0.4ex, xshift=-0.3ex}, shorten <=1pt, shorten >=1pt, Rightarrow, from=2, to=3]
	\arrow["\theta"',shift right=9pt, shorten <=5pt, shorten >=5pt, Rightarrow, from=1, to=2-8]
\end{tikzcd}.\]
Before we continue, we need $\Lan_FH$ to be product preserving.
\begin{proposition}
Let $\CC,\mathbb{D},\mathbb{E}$ be categories with finite products such that $\CC$ is small and $\mathbb{E}$ is cocomplete and cartesian closed, and let $F:\CC\to\mathbb{D}$ and $H:\CC\to\mathbb{E}$ be finite product preserving functors. Then $\Lan_FH:\mathbb{D}\to\mathbb{E}$ preserves finite products.\end{proposition}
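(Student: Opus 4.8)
The plan is to realise $\Lan_F H$ through its pointwise colimit formula and then reduce the preservation of finite products to a cofinality statement about comma categories. Since $\CC$ is small, every comma category $(F\downarrow d)$ --- whose objects are the pairs $(c,u\colon Fc\to d)$ --- is small, and since $\mathbb E$ is cocomplete the left Kan extension exists and is computed fibrewise by
$$(\Lan_F H)(d)\cong\colim_{(c,u)\in(F\downarrow d)}Hc,$$
with coprojections $\lambda^{d}_{(c,u)}\colon Hc\to(\Lan_F H)(d)$ and with $(\Lan_F H)(w)$, for $w\colon d\to d'$, induced by the reindexing functor $(c,u)\mapsto(c,wu)$. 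The terminal object is handled first: as $F$ preserves finite products, $F\mathbf 1_{\CC}\cong\mathbf 1_{\mathbb D}$, and $(\mathbf 1_{\CC},!)$ is a \emph{terminal} object of $(F\downarrow\mathbf 1_{\mathbb D})$, since each $(c,u)$ admits the unique arrow $c\to\mathbf 1_{\CC}$. A colimit indexed by a category with a terminal object is its value there, so $(\Lan_F H)(\mathbf 1_{\mathbb D})\cong H\mathbf 1_{\CC}\cong\mathbf 1_{\mathbb E}$, using that $H$ preserves the terminal object.

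For binary products, fix $d_1,d_2$ in $\mathbb D$. Because $\mathbb E$ is cartesian closed, each functor $\blank\times X$ is a left adjoint and so preserves colimits; applying this in each variable together with the Fubini law for colimits over a product category yields a distributivity isomorphism
$$(\Lan_F H)(d_1)\times(\Lan_F H)(d_2)\cong\colim_{((c_1,u_1),(c_2,u_2))}Hc_1\times Hc_2.$$
Since $H$ preserves finite products, $Hc_1\times Hc_2\cong H(c_1\times c_2)$, so this right-hand side is exactly $\colim_J G\Phi$, where $J=(F\downarrow d_1)\times(F\downarrow d_2)$, the functor $\Phi\colon J\to(F\downarrow d_1\times d_2)$ sends $((c_1,u_1),(c_2,u_2))$ to $(c_1\times c_2,\,u_1\times u_2)$, and $G(c,v)=Hc$ has $\colim_{(F\downarrow d_1\times d_2)}G\cong(\Lan_F H)(d_1\times d_2)$. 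It therefore suffices to prove that $\Phi$ is a \emph{final} functor, for then cofinality identifies the two colimits.

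The crux is the finality of $\Phi$. For each object $(c,v)$ of $(F\downarrow d_1\times d_2)$ I must show the comma category $((c,v)\downarrow\Phi)$ is nonempty and connected. Write $v=\ple{v_1,v_2}$ with $v_i=\pr i\circ v$. The candidate weakly initial object consists of $((c,v_1),(c,v_2))$ together with the diagonal $\Delta_c\colon c\to c\times c$; it is a genuine object because $F$ preserving products gives $F\Delta_c=\Delta_{Fc}$, whence $(v_1\times v_2)\circ F\Delta_c=\ple{v_1,v_2}=v$. Given any other object --- data $((c_1,u_1),(c_2,u_2))$ and $g\colon c\to c_1\times c_2$ with $(u_1\times u_2)Fg=v$ --- the pair $(\pr1 g,\pr2 g)$ is an arrow from the candidate to it: the compatibility $u_i\circ F(\pr i\,g)=v_i$ is just the $i$-th component of $(u_1\times u_2)Fg=v$ (again using $F\pr i=\pr i$), and the required triangle holds since $\ple{\pr1 g,\pr2 g}=g$. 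Thus every object receives a map from the candidate, so $((c,v)\downarrow\Phi)$ is nonempty and connected and $\Phi$ is final.

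It remains to observe that the resulting isomorphism $(\Lan_F H)(d_1\times d_2)\cong(\Lan_F H)(d_1)\times(\Lan_F H)(d_2)$ is the canonical comparison map $\ple{(\Lan_F H)(\pr1),(\Lan_F H)(\pr2)}$; this is checked by tracing the coprojections $\lambda$ through $\Phi$ and the reindexings along $\pr1,\pr2$, and is routine once finality is established. The single real obstacle is the connectedness in the finality argument, and it is precisely there that the hypotheses are used: $\CC$ having finite products and $F$ preserving them supply $c_1\times c_2$ and the diagonal, while $H$ preserving products lets us rewrite $Hc_1\times Hc_2$ as $H(c_1\times c_2)$. Cocompleteness and cartesian closedness of $\mathbb E$ enter only to guarantee that the colimits exist and that finite products distribute over them.
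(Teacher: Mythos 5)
Your proof is correct, but it cannot be compared step-by-step with the paper's, because the paper contains no proof of this proposition at all: it records the statement as a classical fact about Kan extensions and defers to the literature (Proposition 2.5 of the cited Kelly reference, or the other cited source). What you give is a complete, self-contained argument along the standard route: the pointwise formula $(\Lan_FH)(d)\cong\colim_{(F\downarrow d)}H$, the observation that $(\mathbf 1_{\CC},!)$ is terminal in $(F\downarrow\mathbf 1_{\mathbb D})$ (so the terminal object is preserved), and, for binary products, distributivity of $\times$ over colimits (cartesian closedness plus Fubini) followed by finality of $\Phi\colon((c_1,u_1),(c_2,u_2))\mapsto(c_1\times c_2,u_1\times u_2)$. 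Your finality argument is sound: the diagonal object $((c,\pr1\circ v),(c,\pr2\circ v))$ with $\Delta_c$ receives no special scrutiny beyond being weakly initial in $((c,v)\downarrow\Phi)$, and weak initiality does give nonemptiness and connectedness. The step you declare routine---that the composite isomorphism is the canonical comparison $\ple{(\Lan_FH)(\pr1),(\Lan_FH)(\pr2)}$---does check out: tracing a colimit coprojection at $(c_1\times c_2,\,u_1\times u_2)$ through $(\Lan_FH)(\pr{i})$ gives the coprojection at $(c_1\times c_2,\,u_i\circ F\pr{i})$, which by cocone compatibility along $\pr{i}\colon c_1\times c_2\to c_i$ equals the coprojection at $(c_i,u_i)$ precomposed with $H\pr{i}$, exactly the corresponding component of the distributivity map; since the two maps agree on all coprojections they coincide. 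Two minor caveats: you silently identify $F(c_1\times c_2)$ with $Fc_1\times Fc_2$ (writing $F\Delta_c=\Delta_{Fc}$ and $F\pr{i}=\pr{i}$), which holds only up to the canonical isomorphism and needs harmless bookkeeping; and smallness of $(F\downarrow d)$ uses local smallness of $\mathbb D$ in addition to smallness of $\CC$. As for what each approach buys: the paper's citation keeps its exposition short and rests on established (indeed more general, enriched) results, while your argument makes the proposition self-contained and displays exactly where each hypothesis enters---smallness of $\CC$ and cocompleteness of $\mathbb E$ for the pointwise colimit, cartesian closedness for distributivity, and product preservation by $F$ and $H$ for the finality computation.
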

This is a classical result in the theory of Kan extension that can be shown in different ways, see for instance Proposition 2.5 of \cite{kelly}, or \cite{kan}.
 
If $\mathbb{E}$ is a Grothendieck topos, the hypothesis of the proposition above are satisfied, so $\Lan_FH$ preserves finite products.

Define now a natural transformation $\mathfrak l :R\xrightarrow{\cdot}\Sub(\Lan_FH)\op$. For any object $D\in\mathbb{D}$, and any $\gamma\in R(D)$, write
\begin{equation*}\mathfrak l _D(\gamma)=\bigwedge_{(K,\mathfrak k),\theta}\widehat{\theta}_D^*(\mathfrak k _D(\gamma))\end{equation*}
where $(K,\mathfrak k):R\to\Sub$ is an arrow in $\ED$ and $\theta:(H,\mathfrak h)\to(K,\mathfrak k)(F,\mathfrak f)$ is a 2-arrow, i.e.\ $\mathfrak{h}_A(\alpha)\leq \theta^*_A(\mathfrak{k}_{FA}(\mathfrak{f}_A(\alpha)))$ for all $A\in\CC$ and $\alpha\in P(A)$. Observe that $\mathfrak k _D(\gamma)$ is a subobject of $KD$, $\widehat{\theta}$ is defined by the universal property of the left Kan extension, and $\widehat{\theta}_D^*(\mathfrak k _D(\gamma))$ is the pullback of $\mathfrak k _D(\gamma)$ along $\widehat{\theta}_D:(\Lan_FH)(D)\to KD$, hence it is a subobject of $(\Lan_FH)(D)$. Since $\mathbb{E}$ is a complete category, the infimum of $\{\widehat{\theta}_D^*(\mathfrak k _D(\gamma))\}_{(K,\mathfrak k),\theta}$ exists, and we call it $\mathfrak l _D(\gamma)$.
\begin{lemma} The following properties hold:
\begin{enumerate}
\item $\mathfrak l :R\xrightarrow{\cdot}\Sub(\Lan_FH)\op$ is a natural transformation;
\item $\mathfrak l :R\xrightarrow{\cdot}\Sub(\Lan_FH)\op$ preserves finite meets;
\item $\mathfrak l_{D\times D}(\delta_D)\in\Sub\big((\Lan_FH)(D)\times(\Lan_FH)(D)\big)$ is an equivalence relation for any object $D\in\mathbb{D}$.
\end{enumerate}\end{lemma}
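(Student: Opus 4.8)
The plan is to verify the three assertions in turn, exploiting in each case that the indexing family of pairs $((K,\mathfrak k),\theta)$ does not depend on the object $D$, so that all the meets involved range over one fixed index set. Throughout I write $L=\Lan_F H$ and recall that $L$ and every $K$ preserve finite products, and that in the Grothendieck topos $\ct E$ each $\Sub(X)$ is a complete lattice while every pullback $f^*:\Sub(Y)\to\Sub(X)$ sits in an adjoint string $\exists_f\dashv f^*\dashv\forall_f$; in particular $f^*$ preserves arbitrary meets as well as finite ones. For part 1, I first note that each $\mathfrak l_D$ is monotone, being an infimum of the monotone assignments $\gamma\mapsto\widehat\theta_D^*(\mathfrak k_D(\gamma))$, so $\mathfrak l_D$ is a morphism in $\Pos$. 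For naturality I fix $g:D'\to D$ in $\mathbb D$ and $\gamma\in R(D)$ and compute $L(g)^*(\mathfrak l_D(\gamma))$: since $L(g)^*$ preserves arbitrary meets it commutes with the infimum, so it suffices to rewrite a single term. Naturality of $\widehat\theta:L\to K$ gives $\widehat\theta_D\circ L(g)=K(g)\circ\widehat\theta_{D'}$, whence $L(g)^*\widehat\theta_D^*=\widehat\theta_{D'}^*K(g)^*$, and naturality of $\mathfrak k$ gives $K(g)^*(\mathfrak k_D(\gamma))=\mathfrak k_{D'}(R(g)(\gamma))$. Reassembling the meet over the unchanged index set produces exactly $\mathfrak l_{D'}(R(g)(\gamma))$.

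For part 2, preservation of the top element is immediate: each $\mathfrak k$ and each $\widehat\theta_D^*$ preserve $\top$, so every term equals $\top_{LD}$, and their meet is $\top_{LD}$ as well (the empty meet being $\top$ too). For binary meets, each $\mathfrak k_D$ and each $\widehat\theta_D^*$ preserve $\land$, so the term of $\mathfrak l_D(\gamma\land\gamma')$ indexed by $((K,\mathfrak k),\theta)$ splits as $\widehat\theta_D^*(\mathfrak k_D(\gamma))\land\widehat\theta_D^*(\mathfrak k_D(\gamma'))$. It then remains to invoke the lattice identity $\bigwedge_i(a_i\land b_i)=(\bigwedge_i a_i)\land(\bigwedge_i b_i)$, valid precisely because both families are indexed by the \emph{same} set, to conclude $\mathfrak l_D(\gamma\land\gamma')=\mathfrak l_D(\gamma)\land\mathfrak l_D(\gamma')$.

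Part 3 is the substantial one, and the key observation is that each term of $\mathfrak l_{D\times D}(\delta_D)$ is a kernel pair. Since $(K,\mathfrak k)$ is a homomorphism it preserves the fibered equality, so $\mathfrak k_{D\times D}(\delta_D)$ is the diagonal subobject $\Delta_{KD}:KD\rightarrowtail KD\times KD$, the fibered equality of $\Sub$ on $KD$. Because $L$ and $K$ preserve finite products and $\widehat\theta$ is natural, testing against the two projections forces $\widehat\theta_{D\times D}=\widehat\theta_D\times\widehat\theta_D$ under the canonical isomorphisms $L(D\times D)\cong LD\times LD$ and $K(D\times D)\cong KD\times KD$. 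Hence $\widehat\theta_{D\times D}^*(\Delta_{KD})=(\widehat\theta_D\times\widehat\theta_D)^*(\Delta_{KD})$ is exactly the kernel pair of $\widehat\theta_D:LD\to KD$ as a subobject of $LD\times LD$, and kernel pairs are equivalence relations in any finitely complete category. Finally $\mathfrak l_{D\times D}(\delta_D)$ is the intersection of this family of equivalence relations; reading reflexivity, symmetry and transitivity through generalized elements $(a,b):T\to LD\times LD$—which factor through the intersection exactly when they factor through every member—each axiom passes to the intersection, so $\mathfrak l_{D\times D}(\delta_D)$ is an equivalence relation.

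I expect the main obstacle to lie in part 3: identifying $\widehat\theta_{D\times D}$ with $\widehat\theta_D\times\widehat\theta_D$ through the product-preservation isomorphisms, so that each term becomes a genuine kernel pair rather than merely the pullback of a diagonal, and then justifying stability of the three equivalence-relation axioms under an arbitrary intersection inside the topos. The only remaining delicacy is size-theoretic—that the indexing pairs form a set, so that the meets exist—but this has already been absorbed into the completeness of $\ct E$ used to define $\mathfrak l$.
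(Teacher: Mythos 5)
Your proof is correct and takes essentially the same route as the paper's: part 1 via preservation of arbitrary meets by pullback functors together with naturality of $\widehat\theta$ and $\mathfrak k$; part 2 componentwise plus the interchange of infima; and part 3 by identifying $\mathfrak k_{D\times D}(\delta_D)$ with $\Delta_{KD}$, $\widehat\theta_{D\times D}$ with $\widehat\theta_D\times\widehat\theta_D$, recognizing each term as the kernel pair of $\widehat\theta_D$, and using stability of equivalence relations under intersection. You merely spell out details (monotonicity, the product-component identification, the generalized-element argument for the intersection) that the paper leaves implicit.
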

\begin{proof}
\begin{enumerate}
\item Take an arrow $g:D'\to D$ in $\mathbb{D}$, we prove that $\big((\Lan_FH)(g)\big)^*\mathfrak{l}_D(\gamma)=\mathfrak{l}_{D'}R(g)(\gamma)$ for any $\gamma\in RD$:
\begin{align*}\big((\textstyle{\Lan_FH})(g)\big)^*\mathfrak{l}_D(\gamma)&=\big((\textstyle{\Lan_FH})(g)\big)^*\left(\bigwedge_{(K,\mathfrak k),\theta}\widehat{\theta}_D^*(\mathfrak k _D(\gamma))\right)\\
&=\bigwedge_{(K,\mathfrak k),\theta}\big((\textstyle{\Lan_FH})(g)\big)^*\widehat{\theta}_D^*(\mathfrak k _D(\gamma))\\
&=\bigwedge_{(K,\mathfrak k),\theta}\widehat{\theta}_{D'}^*\big(K(g)\big)^*(\mathfrak k _D(\gamma))\\
&=\bigwedge_{(K,\mathfrak k),\theta}\widehat{\theta}_{D'}^*(\mathfrak k _{D'}R(g)(\gamma))=\mathfrak l _{D'}(R(g)\gamma).\end{align*}
Note that the second equality follows from the fact that pullback functors between subobjects categories preserve arbitrary limits---since in a regular categories they have a left adjoint---; the other equalities follow from naturality of $\widehat{\theta}$ and $\mathfrak k$.
\item The top element $\top_D\in RD$ for any object $D\in\mathbb{D}$ is preserved by $\mathfrak l _D$ since $\mathfrak k_D(\top_D)$ is $\id{KD}:KD\to KD$ the top element in $\Sub(KD)$ by assumption, and its pullback along any $\widehat{\theta}_D$ is the identity of $(\Lan_FH)(D)$. Similarly, $\mathfrak l _D$ preserves binary meets since any $\mathfrak k _D$ and any pullback functor do.
\item Compute
\begin{equation*}\mathfrak l_{D\times D}(\delta_D)=\bigwedge_{(K,\mathfrak k),\theta}\widehat{\theta}_{D\times D}^*(\mathfrak k _{D\times D}(\delta_D))=\bigwedge_{(K,\mathfrak k),\theta}{(\widehat{\theta}_{D}\times\widehat{\theta}_{D})}^*(\Delta_{KD}).\end{equation*}
Note that each ${(\widehat{\theta}_{D}\times\widehat{\theta}_{D})}^*(\Delta_{KD})$ is an equivalence relation on $(\Lan_FH)(D)$, since it is the kernel pair of the map $\widehat{\theta}_{D}$. So $\mathfrak{l}_{D\times D}(\delta_D)$ is an equivalence relation itself, being the infimum of equivalence relations.\qedhere
\end{enumerate}
\end{proof}

Before we go on, we recall from Section 4 of \cite{ElemQuotCompl} the \emph{elementary quotient completion} of an elementary doctrine. Given any elementary doctrine $P:\CC\op\to\Pos$ one can build the category $\mathcal{R}_P$ of equivalence relations of $P$: objects are pairs $(A,\rho)$, where $\rho\in P(A\times A)$ is a $P$-equivalence relation on $A$---meaning that it satisfies reflexivity $\delta_A\leq_{A\times A}\rho$, symmetry $\rho\leq_{A\times A}P(\ple{\pr2,\pr1})(\rho)$ and transitivity $P(\ple{\pr1,\pr2})(\rho)\land P(\ple{\pr2,\pr3})(\rho)\leq_{A\times A\times A}P(\ple{\pr1,\pr3})(\rho)$; arrows $ f :(A,\rho)\to(B,\sigma)$ in $\mathcal{R}_P$ are arrows $f:A\to B$ such that $\rho\leq_{A\times A}P(f\times f)(\sigma)$. Composition and identities are computed in $\CC$. The product in the category $\mathcal{R}_P$ of a pair of objects $(A,\rho)$ and $(B,\sigma)$ is $(A\times B,\rho\boxtimes\sigma)$, together with the projections from $A\times B$ to $A$ and $B$. Then, the elementary quotient completion $(P)_q:\mathcal{R}_P\op\to\Pos$ will be given by $(P)_q(A,\rho)=\des_\rho=\{\alpha\in P(A)\mid P(\pr1)(\alpha)\land\rho\leq P(\pr2)(\alpha)\}$, and $(P)_q( f )=P(f)$. Finite meets in $\des_\rho$ are computed in $P(A)$. The fibered equality in $(P)_q\big((A,\rho)\times(A,\rho)\big)=\des_{\rho\boxtimes\rho}$ is $\rho$ itself. In particular $(P)_q$ is an elementary doctrine.

The elementary quotient completion comes with a 1-arrow $(J,j):P\to(P)_q$ in $\ED$, where the functor $J:\CC\to\mathcal{R}_P$ maps a homomorphism $f:A\to B$ to $f:(A,\delta_A)\to(B,\delta_B)$, and each component $j_A:P(A)\to (P)_q(A,\delta_A)$ is the identity of $P(A)$.

A $P$-quotient of a $P$-equivalence relation $\rho$ on $A$ is a homomorphism $q:A\to C$ in $\CC$ such that $\rho\leq P(q\times q)(\delta_C)$ and, for every homomorphism $g:A\to B$ satisfying $\rho\leq P(g\times g)(\delta_B)$, there exists a unique homomorphism $h:C\to B$ such that $g=hq$. A homomorphism $f: A\to B$ in $\CC$ is descent if the functor $P(f):P(B)\to P(A)$ is full. 

Then, let $\QED$ be the 2-full 2-subcategory of $\ED$ whose objects are elementary doctrines $P:\CC\op\to\Pos$ in which every $P$-equivalence relation has a $P$-quotient that is a descent homomorphism; the 1-arrows are those arrows $(G,\mathfrak g):P\to Z$ in $\ED$ such that $G$ preserves quotients---meaning, if $q:A\to C$ is a quotient of a $P$-equivalence relation $\rho$ on $A$, then $Gq$ is a quotient of the $Z$-equivalence relation $\mathfrak{g}_{A\times A}(\rho)$ on $GA$.

The doctrine $(P)_q$ and the homomorphism $(J,j)$ have the following universal property, stated in Theorem 4.5 of \cite{ElemQuotCompl}: there is an equivalence of categories 
\begin{equation*}\blank\circ(J,j):\QED((P)_q,Z) \to \ED(P,Z)\end{equation*}
for every $Z$ in $\QED$.
Having concluded the recap, we can resume our proof.

Since $\mathfrak l_{D\times D}(\delta_D)$ is an equivalence relation on $(\Lan_FH)(D)$, we can define a functor $\mathscr{L}=\langle\Lan_FH(\blank),\mathfrak l_{\blank\times\blank}(\delta_\blank)\rangle:\mathbb{D}\to\mathcal{R}_{\Sub}$, where $\mathcal{R}_{\Sub}$ is the category of equivalence relations of $\Sub:\mathbb{E}\op\to\Pos$. This follows from the fact that a $\Sub$-equivalence relation is an equivalence relation in the usual meaning. Given an arrow $g:D'\to D$ in $\mathbb{D}$, we define
\begin{equation*}\mathscr{L}(g)=(\Lan_FH)(g) :\big((\Lan_FH)(D'),\mathfrak l_{D'\times D'}(\delta_{D'})\big)\to\big((\Lan_FH)(D),\mathfrak l_{D\times D}(\delta_D)\big).\end{equation*}
\pointinproof{$\mathscr{L}$ is well defined on arrows}
In order to prove this is well defined we need
\begin{equation*}\mathfrak l_{D'\times D'}(\delta_{D'})\leq \big((\Lan_FH)(g)\times(\Lan_FH)(g)\big)^*(\mathfrak l_{D\times D}(\delta_D))\end{equation*}
in $\Sub((\Lan_FH)(D\times D))$, so we need a map $\dom\mathfrak l_{D'\times D'}(\delta_D')\to\dom\mathfrak l_{D\times D}(\delta_D)$ making the external diagram commute.
\[\begin{tikzcd}
	{\dom\mathfrak{l}_{D'\times D'}(\delta_{D'})} \\
	& {\dom((\Lan_FH)(g\times g))^*(\mathfrak{l}_{D\times D}(\delta_{D}))} & {\dom\mathfrak{l}_{D\times D}(\delta_{D})} \\
	& {(\Lan_FH)(D'\times D')} & {(\Lan_FH)(D\times D)}
	\arrow["((\Lan_FH)(g\times g))^*(\mathfrak{l}_{D\times D}(\delta_{D}))"'{yshift=0.3ex},tail, from=2-2, to=3-2]
	\arrow["{(\Lan_FH)(g\times g)}", from=3-2, to=3-3]
	\arrow[from=2-2, to=2-3]
	\arrow["\mathfrak{l}_{D\times D}(\delta_{D})",tail, from=2-3, to=3-3]
	\arrow["\mathfrak{l}_{D'\times D'}(\delta_{D'})"',bend right, tail, from=1-1, to=3-2]
	\arrow[bend left=10pt, from=1-1, to=2-3]
	\arrow["\leq",dashed, tail, from=1-1, to=2-2]
	\arrow["\lrcorner"{anchor=center, pos=0.125}, draw=none, from=2-2, to=3-3]
\end{tikzcd}\]
So now consider for each $(K,\mathfrak k):R\to\Sub$ and $\theta:(H,\mathfrak h)\to(K,\mathfrak k)(F,\mathfrak f)$.
\[\begin{tikzcd}
	& {KD'} \\
	{\dom\widehat\theta_{D'\times D'}^*(\Delta_{KD'})} & {\dom\widehat\theta_{D\times D}^*(\Delta_{KD})} & KD \\
	{(\Lan_FH)(D'\times D')} & {(\Lan_FH)(D\times D)} & {KD\times KD}
	\arrow["{\widehat\theta_{D\times D}^*(\Delta_{KD})}", tail, from=2-2, to=3-2]
	\arrow["{\widehat\theta_{D\times D}}"', from=3-2, to=3-3]
	\arrow["{\widehat\theta_{{D\times D}_{\mid\dom\widehat\theta_{D\times D}^*(\Delta_{KD})}}}"{xshift=-2ex}, from=2-2, to=2-3]
	\arrow["{\Delta_{KD}}", tail, from=2-3, to=3-3]
	\arrow["\lrcorner"{anchor=center, pos=0.125}, draw=none, from=2-2, to=3-3]
	\arrow["{(\Lan_FH)(g\times g)}"'{yshift=-0.8ex}, from=3-1, to=3-2]
	\arrow[dashed, from=2-1, to=2-2]
	\arrow["{\widehat\theta_{D'\times D'}^*(\Delta_{KD'})}"', tail, from=2-1, to=3-1]
	\arrow["{K(g\times g)\widehat\theta_{D'\times D'}}"', bend right, from=3-1, to=3-3]
	\arrow["{\widehat\theta_{{D'\times D'}_{\mid\dom\widehat\theta_{D'\times D'}^*(\Delta_{KD'})}}}"{xshift=-1ex, yshift=-1.5ex},bend left=20pt, from=2-1, to=1-2]
	\arrow["Kg", bend left=20pt, from=1-2, to=2-3, end anchor={[xshift=2ex]}]
\end{tikzcd}\]
By definition of $\mathfrak{l}$ as infimum we can find the wanted arrow from $\dom\mathfrak l_{D'\times D'}(\delta_{D'})$ to $\dom\mathfrak l_{D\times D}(\delta_D)$.
\pointinproof{$\mathscr{L}$ preserves products}
To see this, compute $\mathscr{L}(D\times D')$ and $\mathscr{L}(D)\times\mathscr{L}(D')$: the first projections $\Lan_FH(D\times D')=\Lan_FH(D)\times\Lan_FH(D')$ coincide, since $\Lan_FH$ preserves products; so we need to show that also the equivalence relations $\mathfrak{l}_{D\times D'\times D\times D'}(\delta_{D\times D'})$ and $\mathfrak{l}_D(\delta_D)\boxtimes\mathfrak{l}_{D'}(\delta_{D'})$ are the same subobject of $(\Lan_FH)(D\times D'\times D\times D')$.
First of all, we have that:
\begin{equation}\label{eq:pres_prod1}\mathfrak{l}_{D\times D'\times D\times D'}(\delta_{D\times D'})=\bigwedge_{(K,\mathfrak k),\theta}{\widehat{\theta}_{D\times D'\times D\times D'}}^*(\Delta_{KD\times KD'});\end{equation}
on the other hand we have:
\begin{align}\label{eq:pres_prod2}\mathfrak{l}_D(\delta_D)\boxtimes\mathfrak{l}_{D'}(\delta_{D'})&=\ple{\pr1,\pr3}^*\left(\bigwedge_{(K,\mathfrak k),\theta}{\widehat{\theta}_{D\times D}}^*(\Delta_{KD})\right)\land\ple{\pr2,\pr4}^*\left(\bigwedge_{(K,\mathfrak k),\theta}{\widehat{\theta}_{D'\times D'}}^*(\Delta_{KD'})\right)\nonumber\\
&=\bigwedge_{(K,\mathfrak k),\theta}\left(\ple{\pr1,\pr3}^*{\widehat{\theta}_{D\times D}}^*(\Delta_{KD})\land\ple{\pr2,\pr4}^*{\widehat{\theta}_{D'\times D'}}^*(\Delta_{KD'})\right);\end{align}
we prove with some diagram computation that for each $(K,\mathfrak{k}),\theta$ the arguments in the meets are the same.
So we compute at first every ${\widehat{\theta}_{D\times D'\times D\times D'}}^*(\Delta_{KD\times KD'})$ appearing in \eqref{eq:pres_prod1}:
\begin{equation*}\begin{tikzcd}
	{\dom\widehat{\theta}^*_{D\times D'\times D\times D'}(\Delta_{KD\times KD'})} && {KD\times KD'} \\
	{(\Lan_FH)(D\times D'\times D\times D')} && {KD\times KD'\times KD\times KD'}
	\arrow["{\widehat{\theta}^*_{D\times D'\times D\times D'}(\Delta_{KD\times KD'})}"', tail, from=1-1, to=2-1]
	\arrow["{\widehat{\theta}_{{D\times D'\times D\times D'}_{\mid\dom\widehat{\theta}^*_{D\times D'\times D\times D'}(\Delta_{KD\times KD'})}}}"{yshift=0.7ex}, from=1-1, to=1-3]
	\arrow["{\Delta_{KD\times KD'}}", tail, from=1-3, to=2-3]
	\arrow[""{name=0, anchor=center, inner sep=0}, "{\widehat{\theta}_{D\times D'\times D\times D'}}"', from=2-1, to=2-3]
	\arrow["\lrcorner"{anchor=center, pos=0.125}, draw=none, from=1-1, to=0]
\end{tikzcd}.\end{equation*}
Then, for every $(K,\mathfrak{k}),\theta$, we compute $\ple{\pr1,\pr3}^*{\widehat{\theta}_{D\times D}}^*(\Delta_{KD})$:
\begin{equation}\label{diagr:2}\begin{tikzcd}[column sep=15pt]
	{\dom\ple{\pr1,\pr3}^*{\widehat{\theta}_{D\times D}}^*(\Delta_{KD})} && KD \\
	{(\Lan_FH)(D\times D'\times D\times D')} & {(\Lan_FH)(D\times D)} & {KD\times KD} \\
	& {KD\times KD'\times KD\times KD'}
	\arrow["{\ple{\pr1,\pr3}^*{\widehat{\theta}_{D\times D}}^*(\Delta_{KD})}"', tail, from=1-1, to=2-1]
	\arrow["{(\widehat{\theta}_{D\times D}\ple{\pr1,\pr3})_{\mid\dom\ple{\pr1,\pr3}^*{\widehat{\theta}_{D\times D}}^*(\Delta_{KD})}}", from=1-1, to=1-3]
	\arrow["{\Delta_{KD}}", tail, from=1-3, to=2-3]
	\arrow["{\ple{\pr1,\pr3}}"{pos=0.4}, from=2-1, to=2-2]
	\arrow["{\widehat{\theta}_{D\times D}}"{pos=0.6}, from=2-2, to=2-3]
	\arrow["\lrcorner"{anchor=center, pos=0.125}, draw=none, from=1-1, to=2-3]
	\arrow["{\widehat{\theta}_{D\times D'\times D\times D'}}"'{pos=0.3, xshift=4ex}, from=2-1, to=3-2]
	\arrow["{\ple{\pr1,\pr3}}"'{pos=0.7}, from=3-2, to=2-3]
\end{tikzcd},\end{equation}
and similarly $\ple{\pr2,\pr4}^*{\widehat{\theta}_{D'\times D'}}^*(\Delta_{KD'})$:
\begin{equation}\label{diagr:3}\begin{tikzcd}[column sep=9pt]
	{\dom\ple{\pr2,\pr4}^*{\widehat{\theta}_{D'\times D'}}^*(\Delta_{KD'})} && {KD'} \\
	{(\Lan_FH)(D\times D'\times D\times D')} & {(\Lan_FH)(D'\times D')} & {KD'\times KD'} \\
	& {KD\times KD'\times KD\times KD'}
	\arrow["{\ple{\pr2,\pr4}^*{\widehat{\theta}_{D'\times D'}}^*(\Delta_{KD'})}"', tail, from=1-1, to=2-1]
	\arrow["{(\widehat{\theta}_{D'\times D'}\ple{\pr2,\pr4})_{\mid\dom\ple{\pr2,\pr4}^*{\widehat{\theta}_{D'\times D'}}^*(\Delta_{KD'})}}", from=1-1, to=1-3]
	\arrow["{\Delta_{KD'}}", tail, from=1-3, to=2-3]
	\arrow["{\ple{\pr2,\pr4}}"{pos=0.4,yshift=0.2ex}, from=2-1, to=2-2]
	\arrow["{\widehat{\theta}_{D'\times D'}}"{pos=0.6}, from=2-2, to=2-3]
	\arrow["\lrcorner"{anchor=center, pos=0.125}, draw=none, from=1-1, to=2-3]
	\arrow["{\widehat{\theta}_{D\times D'\times D\times D'}}"'{pos=0.3, xshift=4ex}, from=2-1, to=3-2]
	\arrow["{\ple{\pr2,\pr4}}"'{pos=0.7}, from=3-2, to=2-3]
\end{tikzcd}.\end{equation}
Taking their pullback we get the components of \eqref{eq:pres_prod2}:
\[\begin{tikzcd}[column sep=0.9pt, row sep=huge]
	{\dom(\ple{\pr1,\pr3}^*{\widehat{\theta}_{D\times D}}^*(\Delta_{KD})\land\ple{\pr2,\pr4}^*{\widehat{\theta}_{D'\times D'}}^*(\Delta_{KD'}))} && {\dom\ple{\pr2,\pr4}^*{\widehat{\theta}_{D'\times D'}}^*(\Delta_{KD'})} \\
	{\dom\ple{\pr1,\pr3}^*{\widehat{\theta}_{D\times D}}^*(\Delta_{KD})} && {(\Lan_FH)(D\times D'\times D\times D')}
	\arrow["{\ple{\pr2,\pr4}^*{\widehat{\theta}_{D'\times D'}}^*(\Delta_{KD'})}"{description,xshift=3ex}, tail, from=1-3, to=2-3]
	\arrow["{\ple{\pr1,\pr3}^*{\widehat{\theta}_{D\times D}}^*(\Delta_{KD})}"', tail, from=2-1, to=2-3]
	\arrow["{\omega_2}"', tail, from=1-1, to=2-1, shift right=17]
	\arrow["{\omega_1}",curve={height=-20pt}, tail, from=1-1, to=1-3]
	\arrow["{\ple{\pr1,\pr3}^*{\widehat{\theta}_{D\times D}}^*(\Delta_{KD})\land\ple{\pr2,\pr4}^*{\widehat{\theta}_{D'\times D'}}^*(\Delta_{KD'})}"{description,xshift=-4ex}, tail, from=1-1, to=2-3, start anchor={[xshift=-7ex]}]
	\arrow["\lrcorner"{description, pos=0.2}, shift left=-8, draw=none, from=1-1, to=2-1]
\end{tikzcd}.\]
Now, for each $(K,\mathfrak{k}),\theta$, the inequality
\begin{equation*}{\widehat{\theta}_{D\times D'\times D\times D'}}^*(\Delta_{KD\times KD'})\leq\ple{\pr1,\pr3}^*{\widehat{\theta}_{D\times D}}^*(\Delta_{KD})\land\ple{\pr2,\pr4}^*{\widehat{\theta}_{D'\times D'}}^*(\Delta_{KD'})
\end{equation*}
holds if and only if both
\begin{equation}\label{eq:ineq1}{\widehat{\theta}_{D\times D'\times D\times D'}}^*(\Delta_{KD\times KD'})\leq\ple{\pr1,\pr3}^*{\widehat{\theta}_{D\times D}}^*(\Delta_{KD})\end{equation}
and
\begin{equation}\label{eq:ineq2}{\widehat{\theta}_{D\times D'\times D\times D'}}^*(\Delta_{KD\times KD'})\leq\ple{\pr2,\pr4}^*{\widehat{\theta}_{D'\times D'}}^*(\Delta_{KD'})\end{equation}
hold.
To show the inequality \eqref{eq:ineq1}, take the pair $\widehat{\theta}^*_{D\times D'\times D\times D'}(\Delta_{KD\times KD'})$ and the first projection of
\begin{equation*}\widehat{\theta}_{{D\times D'\times D\times D'}_{\mid\dom\widehat{\theta}^*_{D\times D'\times D\times D'}(\Delta_{KD\times KD'})}},\end{equation*} then use the universal property of the pullback \eqref{diagr:2} above. Similarly, prove the inequality \eqref{eq:ineq2} by taking the pair $\widehat{\theta}^*_{D\times D'\times D\times D'}(\Delta_{KD\times KD'})$ and the second projection of
\begin{equation*}\widehat{\theta}_{{D\times D'\times D\times D'}_{\mid\dom\widehat{\theta}^*_{D\times D'\times D\times D'}(\Delta_{KD\times KD'})}},\end{equation*} then use the universal property of the pullback \eqref{diagr:3} above.
For the converse direction, take the pair $\ple{\pr1,\pr3}^*{\widehat{\theta}_{D\times D}}^*(\Delta_{KD})\land\ple{\pr2,\pr4}^*{\widehat{\theta}_{D'\times D'}}^*(\Delta_{KD'})$ and the arrow that has as a first component 
\begin{equation*}(\widehat{\theta}_{D\times D}\ple{\pr1,\pr3})_{\mid\dom\ple{\pr1,\pr3}^*{\widehat{\theta}_{D\times D}}^*(\Delta_{KD})}\omega_2\end{equation*}
and as second component
\begin{equation*}(\widehat{\theta}_{D'\times D'}\ple{\pr2,\pr4})_{\mid\dom\ple{\pr2,\pr4}^*{\widehat{\theta}_{D'\times D'}}^*(\Delta_{KD'})}\omega_1,\end{equation*}
then use the universal property of the pullback \eqref{diagr:2}.
\pointinproof{$(\mathscr{L},\mathfrak l)$ is well defined}
We want to complete the following diagram with $\mathfrak{l}$:
\begin{center}
\begin{tikzcd}
\mathbb{D}\op\arrow[rr,"\mathscr{L}\op"] \arrow[dr,"R"' ,""{name=L}]&&\mathcal{R}_{\Sub}\op\arrow[dl,"(\Sub)_q" ,""'{name=R}]\\
&\Pos\arrow[rightarrow,"\mathfrak{l}","\cdot"', from=L, to=R, bend left=10]
\end{tikzcd}.
\end{center}
We prove that $\mathfrak{l}:R\xrightarrow{\cdot}(\Sub)_q\mathscr{L}\op$ is well defined by showing that for each $\gamma\in RD$, we have $\mathfrak{l}_D(\gamma)\in\des_{\mathfrak{l}_{D\times D}(\delta_D)}$, i.e.\ $\pr1^*\mathfrak{l}_D(\gamma)\land\mathfrak{l}_{D\times D}(\delta_D)\leq\pr2^*\mathfrak{l}_D(\gamma)$ in $\Sub(\Lan_FH)(D\times D)$.
Indeed
\begin{align*}\pr1^*\left(\bigwedge_{(K,\mathfrak k),\theta}\widehat{\theta}_D^*(\mathfrak k _D(\gamma))\right)&\land\bigwedge_{(K,\mathfrak k),\theta}{(\widehat{\theta}_{D}\times\widehat{\theta}_{D})}^*(\Delta_{KD})\\
&=\bigwedge_{(K,\mathfrak k),\theta}\left(\pr1^*\widehat{\theta}_D^*(\mathfrak k _D(\gamma))\land{(\widehat{\theta}_{D}\times\widehat{\theta}_{D})}^*(\Delta_{KD})\right)\\
&=\bigwedge_{(K,\mathfrak k),\theta}{(\widehat{\theta}_{D}\times\widehat{\theta}_{D})}^*\left(\pr1^*(\mathfrak k _D(\gamma))\land\Delta_{KD}\right)\\
&\leq\bigwedge_{(K,\mathfrak k),\theta}{(\widehat{\theta}_{D}\times\widehat{\theta}_{D})}^*\left(\pr2^*(\mathfrak k _D(\gamma))\right)=\pr2^*\mathfrak{l}_D(\gamma).\end{align*}
So we proved that we have a 1-arrow between the doctrines $(\mathscr{L},\mathfrak l):R\to(\Sub)_q$.
\pointinproof{$(\mathscr{L},\mathfrak l)$ is in $\ED$} By construction $\mathfrak{l}$ is a natural transformation and preserves finite meets. Recalling that in the elementary quotient completion $(P)_q:\mathcal{R}_P\op\to\Pos$ of a doctrine $P:\CC\op\to\Pos$, the fibered equality in $(P)_q\big((A,\rho)\times(A,\rho)\big)=\des_{\rho\boxtimes\rho}$ is $\rho$ itself, we apply this to the case $(\Sub)_q$ to obtain that
\begin{equation*}\mathfrak{l}_{D\times D}:R(D\times D)\to(\Sub)_q\big(((\Lan_FH)(D),\mathfrak l_{D\times D}(\delta_D))\times((\Lan_FH)(D),\mathfrak l_{D\times D}(\delta_D))\big)\end{equation*}
or, computing the codomain,
\begin{equation*}\mathfrak{l}_{D\times D}:R(D\times D)\to\des_{\mathfrak l_{D\times D}(\delta_D)\boxtimes\mathfrak l_{D\times D}(\delta_D)}\end{equation*}
so $\mathfrak{l}$ preserves the fibered equality and $(\mathscr{L},\mathfrak l)$ is in $\ED$.
\pointinproof{From $(\Sub)_q$ to $\Sub$} Recall that our goal is to define for each $(H,\mathfrak{h}):P\to\Sub$ in $\ED$ a suitable 1-arrow from $R$ to $\Sub$. So we look for an arrow $(Q,\mathfrak{q}):(\Sub)_q\to\Sub$ in order to define the wanted map as the composition $(Q,\mathfrak{q})(\mathscr{L},\mathfrak{l})$.
\begin{center}
\begin{tikzcd}
\mathcal{R}_{\Sub}\op\arrow[rr,"Q\op"] \arrow[dr,"(\Sub)_q"' ,""{name=L}]&&\mathbb{E}\op\arrow[dl,"\Sub" ,""'{name=R}]\\
&\Pos\arrow[rightarrow,"\mathfrak{q}","\cdot"', from=L, to=R, bend left=10]
\end{tikzcd}
\end{center}
To do this, we want to use the universal property of $(\Sub)_q$: applying Theorem 4.5 of \cite{ElemQuotCompl} recalled in the recap above to our case, there is an equivalence of categories 
\begin{equation*}\blank\circ(J,j):\QED((\Sub)_q,Z) \to \ED(\Sub,Z)\end{equation*}
for every $Z$ in $\QED$.

So we prove that $\Sub$ is in $\QED$, and define $(Q,\mathfrak{q})$ as the essentially unique 1-arrow such that $(Q,\mathfrak{q})(J,j)=\id{\Sub}$.
We show that every equivalence relation $\ple{s_1,s_2}:S\rightarrowtail X\times X$ in $\mathbb{E}$ has a quotient that is a descent homomorphism. Since Grothendieck topos are cocomplete, the quotient exists, and it is the coequalizer $q:X\to X/S$ of $s_1$ and $s_2$. Moreover, since the pullback functor $q^*:\Sub(X/S)\to\Sub(X)$ along an epimorphism reflects the order, the quotient $q$ is a descent homomorphism, as claimed.

\begin{claim}\label{claim:left_adj}
The assignment defined above, sending $(H,\mathfrak{h})$ to $(Q,\mathfrak{q})(\mathscr{L},\mathfrak{l})$ extends to a left adjoint to $\blank\circ(F,\mathfrak f)$.
\end{claim}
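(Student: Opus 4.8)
The strategy is to realise the unit of the putative adjunction as a universal arrow and then invoke the universal-arrow characterisation of adjoint functors (Chapter IV of \cite{CatWorMat}): it suffices to exhibit, for each $(H,\mathfrak h)\colon P\to\Sub$, a $2$-cell $\eta\colon(H,\mathfrak h)\to(Q,\mathfrak q)(\mathscr L,\mathfrak l)(F,\mathfrak f)$ in $\ED(P,\Sub)$ that is initial among $2$-cells from $(H,\mathfrak h)$ into the image of $\blank\circ(F,\mathfrak f)$; functoriality of the assignment on $2$-cells and naturality of the resulting bijection then come for free. Write $q_D\colon(\Lan_FH)(D)\twoheadrightarrow Q\mathscr L(D)$ for the coequaliser of the equivalence relation $\mathfrak l_{D\times D}(\delta_D)$; this is precisely $Q$ applied to the $(\Sub)_q$-quotient arrow $\mathrm{id}\colon\big((\Lan_FH)(D),\delta\big)\to\mathscr L(D)$, so $q$ is natural in $D$ and each $q_D$ is a regular epimorphism whose reindexing $q_D^*$ reflects the order. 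I would first record the descent identity $q_D^*\circ\mathfrak q_{\mathscr L(D)}=\mathrm{id}$ on $\des_{\mathfrak l_{D\times D}(\delta_D)}$, which is immediate from naturality of $\mathfrak q$ along that quotient arrow together with $\mathfrak q_{J(\blank)}=\mathrm{id}$ (a consequence of $(Q,\mathfrak q)(J,j)=\id{\Sub}$). The unit is then defined componentwise by $\eta_A:=q_{FA}\circ\mu_A\colon HA\to Q\mathscr LFA$, with $\mu$ the Kan unit.

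To check that $\eta$ is a genuine $2$-cell in $\ED(P,\Sub)$, I must verify $\mathfrak h_A(\alpha)\le\eta_A^*\big(\mathfrak q_{\mathscr L(FA)}(\mathfrak l_{FA}(\mathfrak f_A(\alpha)))\big)$ for all $A\in\CC$ and $\alpha\in PA$. Since $\eta_A^*=\mu_A^*q_{FA}^*$, the descent identity collapses the right-hand side to $\mu_A^*\mathfrak l_{FA}(\mathfrak f_A(\alpha))$; expanding $\mathfrak l$ as an infimum and using $\widehat\theta_{FA}\mu_A=\theta_A$, this equals $\bigwedge_{(K,\mathfrak k),\theta}\theta_A^*(\mathfrak k_{FA}(\mathfrak f_A(\alpha)))$, and each index term dominates $\mathfrak h_A(\alpha)$ exactly because $\theta$ is a $2$-cell $(H,\mathfrak h)\to(K,\mathfrak k)(F,\mathfrak f)$.

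For universality, fix $(K,\mathfrak k)\in\ED(R,\Sub)$ and a $2$-cell $\theta\colon(H,\mathfrak h)\to(K,\mathfrak k)(F,\mathfrak f)$. The Kan extension yields $\widehat\theta\colon\Lan_FH\to K$ with $\widehat\theta F\circ\mu=\theta$. As the pair $((K,\mathfrak k),\theta)$ is one of the indices of the infimum, $\mathfrak l_{D\times D}(\delta_D)\le(\widehat\theta_D\times\widehat\theta_D)^*(\Delta_{KD})$, and the latter is the kernel pair of $\widehat\theta_D$; hence $\widehat\theta_D$ coequalises $\mathfrak l_{D\times D}(\delta_D)$ and factors uniquely as $\widehat\theta_D=\bar\theta_D q_D$. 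Naturality of $\widehat\theta$ and $q$ together with $q_D$ epimorphic make the $\bar\theta_D$ into a natural transformation $\bar\theta\colon Q\mathscr L\to K$. That $\bar\theta$ is a $2$-cell $L(H,\mathfrak h)\to(K,\mathfrak k)$ in $\ED(R,\Sub)$ is checked fibrewise: since $q_D^*$ reflects order, the required $\mathfrak q_{\mathscr L(D)}(\mathfrak l_D(\gamma))\le\bar\theta_D^*(\mathfrak k_D(\gamma))$ reduces, after pulling back along $q_D$ and applying the descent identity, to $\mathfrak l_D(\gamma)\le\widehat\theta_D^*(\mathfrak k_D(\gamma))$, which holds because $((K,\mathfrak k),\theta)$ indexes the infimum defining $\mathfrak l_D(\gamma)$. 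The triangle identity is then $\bar\theta_{FA}\eta_A=\bar\theta_{FA}q_{FA}\mu_A=\widehat\theta_{FA}\mu_A=\theta_A$.

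Uniqueness is the point where the two universal constructions must cooperate: if $\bar\theta'$ also satisfies $\bar\theta'_{FA}\eta_A=\theta_A$, then $\bar\theta'\circ q\colon\Lan_FH\to K$ is natural and satisfies $(\bar\theta'\circ q)F\circ\mu=\theta$, so the uniqueness clause of the Kan extension forces $\bar\theta'\circ q=\widehat\theta=\bar\theta\circ q$, whence $\bar\theta'=\bar\theta$ since every $q_D$ is epic. This exhibits $\eta$ as the sought universal arrow and produces the left adjoint. I expect the genuine difficulty to lie precisely in making the left Kan extension (which controls the underlying functor and the factorisation $\widehat\theta$) and the elementary quotient completion (which controls the fibred and elementary data through $\mathfrak q$) interact correctly; the descent identity $q_D^*\mathfrak q=\mathrm{id}$ and the order-reflection of $q_D^*$ are exactly what let the universal property of $\Lan_FH$ be transported to the candidate adjoint, while everything else is bookkeeping with the infimum defining $\mathfrak l$.
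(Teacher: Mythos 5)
Your proposal is correct and follows essentially the same route as the paper: your $q_D$ is the paper's $\rho_D=Q(\id{(\Lan_FH)(D)})$, your unit $q_{FA}\mu_A$ is the paper's $\rho_{FA}\mu_A$, your descent identity and order-reflection of $q_D^*$ correspond to the paper's sub-claim that $\rho_D$ is a regular epimorphism, and the $2$-cell checks, triangle identity, and uniqueness argument (Kan extension plus epicness of $q_D$) coincide with the paper's. The only cosmetic difference is that you produce $\bar\theta_D$ via the coequaliser property of $q_D$ in $\mathbb{E}$, whereas the paper applies $Q$ to the factorisation of $\widehat\theta_D$ through $\bigl((\Lan_FH)(D),\mathfrak{l}_{D\times D}(\delta_D)\bigr)$ in $\mathcal{R}_{\Sub}$; both rest on the same inequality $\mathfrak{l}_{D\times D}(\delta_D)\leq\widehat\theta_{D\times D}^*(\Delta_{KD})$ coming from the definition of $\mathfrak{l}$ as an infimum.
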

We look for the universal arrow
\begin{equation*}\eta_{(H,\mathfrak h)}:(H,\mathfrak h)\to(Q,\mathfrak{q})(\mathscr{L},\mathfrak{l})(F,\mathfrak f).\end{equation*}
In particular, we need a natural transformation $\eta_{(H,\mathfrak h)}:H\xrightarrow{\cdot} Q\mathscr{L}F$, or equivalently---by the properties of the left Kan extension---a factorization for any object $A\in\CC$
\begin{equation*}\big(\eta_{(H,\mathfrak h)}\big)_A:HA\xrightarrow{\mu_A}(\Lan_FH)(FA)\xrightarrow{\rho_{FA}}Q\big((\Lan_FH)(FA),\mathfrak l_{FA\times FA}(\delta_{FA})\big)\end{equation*}
for some natural transformation $\rho:\Lan_FH\xrightarrow{\cdot}Q\mathscr{L}$. To define $\rho$, note that for any $(X,s)\in~\mathcal{R}_{\Sub}$, there is an arrow $\id{X}:(X,\Delta_X)\to(X,s)$ in $\mathcal{R}_{\Sub}$---since $s$ is an equivalence relation. Apply $Q$ to obtain $Q(\id{X}):X\to Q(X,s)$ in $\mathbb{E}$. So we can define $\rho_D=Q(\id{(\Lan_FH)(D)})$; this is clearly a natural transformation, since the naturality square
\[\begin{tikzcd}
	{D'} & {(\Lan_FH)(D')} && {Q((\Lan_FH)(D'),\mathfrak l_{D'\times D'}(\delta_{D'}))} \\
	D & {(\Lan_FH)(D)} && {Q((\Lan_FH)(D),\mathfrak l_{D\times D}(\delta_{D}))}
	\arrow["g"', from=1-1, to=2-1]
	\arrow["{(\Lan_FH)(g)}"', from=1-2, to=2-2]
	\arrow["{Q(\id{(\Lan_FH)(D)})}"'{yshift=-0.2ex}, from=2-2, to=2-4]
	\arrow["{Q(\id{(\Lan_FH)(D')})}", from=1-2, to=1-4]
	\arrow["{Q(\Lan_FH)(g)}", from=1-4, to=2-4]
\end{tikzcd}\]
is the image through $Q$ of
\[\begin{tikzcd}
	{((\Lan_FH)(D'),\Delta_{(\Lan_FH)(D')})} && {((\Lan_FH)(D'),\mathfrak l_{D'\times D'}(\delta_{D'}))} \\
	{((\Lan_FH)(D),\Delta_{(\Lan_FH)(D)})} && {((\Lan_FH)(D),\mathfrak l_{D\times D}(\delta_{D}))}
	\arrow["{(\Lan_FH)(g)}"', from=1-1, to=2-1]
	\arrow["{\id{(\Lan_FH)(D)}}"', from=2-1, to=2-3]
	\arrow["{\id{(\Lan_FH)(D')}}", from=1-1, to=1-3]
	\arrow["{(\Lan_FH)(g)}", from=1-3, to=2-3]
\end{tikzcd}.\]
So $\eta_{(H,\mathfrak{h})}$ is defined, and it is a natural transformation, since it is the composition of natural transformations.
\pointinproof{$\eta_{(H,\mathfrak h)}$ is a 2-arrow} We need to show that
\begin{equation*}\mathfrak{h}_A(\alpha)\leq \big(\eta_{(H,\mathfrak h)}\big)_A^*(\mathfrak{q}_{\mathscr{L}FA}\mathfrak{l}_{FA}\mathfrak{f}_A(\alpha))=\mu_A^*\rho_{FA}^*(\mathfrak{q}_{\mathscr{L}FA}\mathfrak{l}_{FA}\mathfrak{f}_A(\alpha)).\end{equation*}
Observe that from naturality of $\mathfrak{q}$, we have for any $(X,s)\in\mathcal{R}_{\Sub}$ the following commutative diagram:
\[\begin{tikzcd}
	{(X,\Delta_X)} & {(\Sub)_q(X,\Delta_X)} & {\Sub Q(X,\Delta_X)} \\
	{(X,s)} & {(\Sub)_q(X,s)} & {\Sub Q(X,s)}
	\arrow["{\id{X}}", from=1-1, to=2-1]
	\arrow["{\mathfrak{q}_{(X,\Delta_X)}}"{yshift=0.2ex}, from=1-2, to=1-3]
	\arrow["{\mathfrak{q}_{(X,s)}}"', from=2-2, to=2-3]
	\arrow["{\id{X}^*}", from=2-2, to=1-2]
	\arrow["{Q(\id{X})^*}"', from=2-3, to=1-3]
\end{tikzcd}\]
Note that $\id{X}^*$ is just the inclusion $\des_s\subseteq\Sub X$, and $\mathfrak{q}_{(X,\Delta_X)}=\mathfrak{q}_{JX}$ is such that $\mathfrak{q}_{JX}j_X=\id{\Sub X}$, hence $\mathfrak{q}_{(X,\Delta_X)}=\id{\Sub X}$, so to conclude we have that $Q(\id{X})^*\mathfrak{q}_{(X,s)}$ is the inclusion $\des_s\subseteq\Sub X$. Apply this when
\begin{equation*}(X,s)=((\Lan_FH)(FA),\mathfrak{l}_{FA\times FA}(\delta_{FA}))=\mathscr{L}(FA)\end{equation*}
to get that $\rho_{FA}^*\mathfrak{q}_{\mathscr{L}FA}$ acts as the identity, so our claim becomes
\begin{equation*}\mathfrak{h}_A(\alpha)\leq \mu_A^*(\mathfrak{l}_{FA}\mathfrak{f}_A(\alpha)).\end{equation*}
But now
\begin{equation*}\mu_A^*(\mathfrak{l}_{FA}\mathfrak{f}_A(\alpha))=\mu_A^*\big(\bigwedge_{(K,\mathfrak k),\theta}\widehat{\theta}_{FA}^*(\mathfrak k _{FA}\mathfrak{f}_A(\alpha))\big)=\bigwedge_{(K,\mathfrak k),\theta}\mu_A^*\widehat{\theta}_{FA}^*(\mathfrak k _{FA}\mathfrak{f}_A(\alpha))=\bigwedge_{(K,\mathfrak k),\theta}\theta_A^*(\mathfrak k _{FA}\mathfrak{f}_A(\alpha)),\end{equation*}
but every $\theta$ is a 2-arrow, so $\mathfrak{h}_A(\alpha)\leq\theta_A^*(\mathfrak k _{FA}\mathfrak{f}_A(\alpha))$, hence $\eta_{(H,\mathfrak h)}$ is indeed a 2-arrow.
\pointinproof{$\eta_{(H,\mathfrak h)}$ has the universal property} We now prove that for every arrow $(K,\mathfrak k):R\to\Sub$ in $\ED$ and every 2-arrow $\theta:(H,\mathfrak h)\to(K,\mathfrak k)(F,\mathfrak f)$ there exists a unique 2-arrow
\begin{equation*}\overline\theta:(Q,\mathfrak{q})(\mathscr{L},\mathfrak{l})\to(K,\mathfrak k),\end{equation*}
making the following diagram commute:
\[\begin{tikzcd}
	{(H,\mathfrak h)} && {(Q,\mathfrak{q})(\mathscr{L},\mathfrak{l})(F,\mathfrak f)} \\
	& {(K,\mathfrak k)(F,\mathfrak f)}
	\arrow["{\eta_{(H,\mathfrak h)}}", from=1-1, to=1-3]
	\arrow["\theta"', from=1-1, to=2-2]
	\arrow["{\overline\theta\circ(F,\mathfrak f)}", from=1-3, to=2-2]
\end{tikzcd}.\]
For any object $D\in\mathbb{D}$, observe that the image of $\widehat\theta_D:\Lan_FHD\to KD$ in $\mathcal{R}_{\Sub}$ through $J$ factors as follows:
\[\begin{tikzcd}
	{(\Lan_FHD,\Delta_{\Lan_FHD})} & {(\Lan_FHD,\mathfrak{l}_{D\times D(\delta_D)})} & {(KD,\Delta_{KD})}
	\arrow["{\id{\Lan_FHD}}", from=1-1, to=1-2]
	\arrow["{\widehat\theta_D}", from=1-2, to=1-3]
	\arrow["{\widehat\theta_D}"', bend right=12pt, from=1-1, to=1-3]
\end{tikzcd}.\]
The first map trivially exists; the second one exists if and only if there exists a dotted arrow in the diagram below:
\[\begin{tikzcd}
	{\dom \mathfrak{l}_{D\times D}(\delta_D)} \\
	& {\dom \widehat\theta_{D\times D}^*(\Delta_{KD})} & {\Lan_FHD\times \Lan_FHD} \\
	& KD & {KD\times KD}
	\arrow["{\Delta_{KD}}"', from=3-2, to=3-3]
	\arrow["{\widehat\theta_D\times\widehat\theta_D}", from=2-3, to=3-3]
	\arrow[from=2-2, to=3-2]
	\arrow["{\widehat\theta_{D\times D}^*(\Delta_{KD})}"{yshift=0.4ex}, from=2-2, to=2-3]
	\arrow["{\mathfrak{l}_{D\times D}(\delta_D)}", bend left=10, from=1-1, to=2-3]
	\arrow[bend right=10, dashed, from=1-1, to=3-2]
	\arrow["\lrcorner"{anchor=center, pos=0.01}, draw=none, from=2-2, to=3-3]
\end{tikzcd}\]
but it exists by definition of $\mathfrak{l}_{D\times D}(\delta_D)$ as the infimum of all subobject of the form $\widehat\theta_{D\times D}^*(\Delta_{KD})$.
Apply $Q$ to the factorization of $\widehat{\theta}_D$ above to obtain in $\mathbb{E}$:
\[\begin{tikzcd}
	{\Lan_FHD} & {Q(\Lan_FHD,\mathfrak{l}_{D\times D(\delta_D)})} & {KD}
	\arrow["{\rho_D}", from=1-1, to=1-2]
	\arrow["{Q(\widehat\theta_D)}", from=1-2, to=1-3]
	\arrow["{\widehat\theta_D}"', bend right=12pt, from=1-1, to=1-3, end anchor={[xshift=1ex, yshift=-1ex]},]
\end{tikzcd}.\]
Define $\overline\theta_D=Q(\widehat\theta_D):Q\mathscr{L}D\to KD$. For any given $g:D'\to D$ in $\mathbb{E}$, we have that the image through $Q$ of the square on the right in the diagram below gives naturality of $\overline\theta$:
\[\begin{tikzcd}
	{(\Lan_FHD',\Delta_{\Lan_FHD'})} & {(\Lan_FHD',\mathfrak{l}_{D'\times D'(\delta_{D'})})} & {(KD',\Delta_{KD'})} \\
	{(\Lan_FHD,\Delta_{\Lan_FHD})} & {(\Lan_FHD,\mathfrak{l}_{D\times D(\delta_D)})} & {(KD,\Delta_{KD})}
	\arrow["{\id{\Lan_FHD}}", from=2-1, to=2-2]
	\arrow["{\widehat\theta_D}", from=2-2, to=2-3]
	\arrow["{\widehat\theta_D}"', bend right=12pt, from=2-1, to=2-3]
	\arrow["{\Lan_FH(g)}"', from=1-1, to=2-1]
	\arrow["{\id{\Lan_FHD'}}"'{yshift=-0.2ex}, from=1-1, to=1-2]
	\arrow["{\Lan_FH(g)}", from=1-2, to=2-2]
	\arrow["{\widehat\theta_{D'}}"', from=1-2, to=1-3]
	\arrow["{K(g)}", from=1-3, to=2-3]
	\arrow["{\widehat\theta_{D'}}", bend left=12pt, from=1-1, to=1-3]
\end{tikzcd}.\]
Now, to prove that $\overline\theta$ is a 2-arrow we show that for any object $D\in\mathbb{D}$ and any $\gamma\in RD$
\begin{equation*}\mathfrak{q}_{\mathscr{L}D}\mathfrak{l}_D(\gamma)\leq\overline\theta_D^*(\mathfrak{k}_D(\gamma)).\end{equation*}
Since $\mathfrak{l}_D(\gamma)\leq\widehat\theta^*_D(\mathfrak{k}_D(\gamma))$, it is enough to prove that $\mathfrak{q}_{\mathscr{L}D}\widehat\theta^*_D(\mathfrak{k}_D(\gamma))\leq Q(\widehat\theta_D)^*(\mathfrak{k}_D(\gamma))$. If $\rho_D^*$ is full, the last inequality is equivalent to $\rho_D^*\mathfrak{q}_{\mathscr{L}D}\widehat\theta^*_D(\mathfrak{k}_D(\gamma))\leq \rho_D^*Q(\widehat\theta_D)^*(\mathfrak{k}_D(\gamma))$, i.e.\ $\widehat\theta^*_D(\mathfrak{k}_D(\gamma))\leq Q(\id{\Lan_FHD})^*Q(\widehat\theta_D)^*(\mathfrak{k}_D(\gamma))=\widehat\theta_D^*\mathfrak{k}_D(\gamma)$. So we check the following:
\begin{claim}
The arrow $\rho_D$ is a regular epimorphism.\end{claim}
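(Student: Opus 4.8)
The plan is to recognise $\rho_D$ as the canonical quotient map, in the topos $\mathbb{E}$, of the equivalence relation $\mathfrak{l}_{D\times D}(\delta_D)$ on $(\Lan_FH)(D)$, and then to invoke that such a quotient is a coequalizer and hence automatically a regular epimorphism. Write $X=(\Lan_FH)(D)$ and $s=\mathfrak{l}_{D\times D}(\delta_D)$, so that by definition $\rho_D=Q(\id{X})$ is the image under $Q$ of the arrow $\id{X}\colon (X,\Delta_X)\to(X,s)$ in $\mathcal{R}_{\Sub}$. First I would observe that this arrow is precisely the $(\Sub)_q$-quotient of the equivalence relation $s$ on the object $JX=(X,\Delta_X)$: by construction of the elementary quotient completion the object $(X,s)$ is the quotient of $s$, with quotient map $\id{X}$, as one checks from $(\Sub)_q(\id{X}\times\id{X})(\delta_{(X,s)})=s$ together with the universal property recalled in the recap.

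Next I would exploit that $(Q,\mathfrak q)$ is a $1$-arrow in $\QED$, so that $Q$ preserves quotients. Applying the definition of quotient-preservation to $q=\id{X}$ and $\rho=s$, the arrow $Q(\id{X})=\rho_D$ is the $\Sub$-quotient of the $\Sub$-equivalence relation $\mathfrak{q}_{(X,\Delta_X)\times(X,\Delta_X)}(s)$ on $Q(X,\Delta_X)$. Here two bookkeeping identities finish the job: since $QJ=\id{\mathbb{E}}$ we have $Q(X,\Delta_X)=X$, and since $(X,\Delta_X)\times(X,\Delta_X)=J(X\times X)$ while $\mathfrak{q}_{JW}=\id{\Sub W}$ for every $W$—which follows, exactly as noted above for $\mathfrak{q}_{(X,\Delta_X)}$, from $\mathfrak{q}_{JW}j_W=\id{\Sub W}$ and $j_W=\id{\Sub W}$—the transported relation $\mathfrak{q}_{J(X\times X)}(s)$ is just $s$ itself. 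Hence $\rho_D$ is the $\Sub$-quotient of $s$ on $X$.

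Finally, as recalled in the paragraph treating the passage from $(\Sub)_q$ to $\Sub$, the $\Sub$-quotient of an equivalence relation $s\rightarrowtail X\times X$ in the Grothendieck topos $\mathbb{E}$ is the coequalizer of its two projections, and a coequalizer is by definition a regular epimorphism; therefore $\rho_D$ is a regular epimorphism. The only delicate point, and the one I expect to be the main obstacle, is the identification of the relation transported along $\mathfrak{q}$ with $s$ itself: everything hinges on the equality $\mathfrak{q}_{JW}=\id{\Sub W}$ and on $\Delta_X\boxtimes\Delta_X=\Delta_{X\times X}$ holding in $\Sub$, so that $Q$ applied to $\id{X}$ really returns the quotient of $s$ and not of some reindexed variant. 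Once this is pinned down, the regular-epimorphism conclusion is immediate.
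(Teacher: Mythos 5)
Your proof is correct and takes essentially the same route as the paper's: both identify $\rho_D$ as $Q$ applied to the $\mathcal{R}_{\Sub}$-quotient $\id{X}:(X,\Delta_X)\to(X,s)$ of $s=\mathfrak{l}_{D\times D}(\delta_D)$, invoke that $(Q,\mathfrak{q})$ lies in $\QED$ and hence preserves quotients, and conclude because $\Sub$-quotients of equivalence relations in the Grothendieck topos $\mathbb{E}$ are coequalizers, hence regular epimorphisms. The only difference is one of explicitness: where you appeal to the construction of the elementary quotient completion, the paper verifies in one line that $\id{X}$ actually has the quotient universal property (any $g:(X,\Delta_X)\to(Y,u)$ with $s\leq(g\times g)^*(u)$ factors uniquely as $h\circ\id{X}$ with $h=g$), a check worth writing out since the universal property recalled in the recap is the hom-equivalence $\blank\circ(J,j)$, which does not literally state which arrows the quotients in $\mathcal{R}_{\Sub}$ are.
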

\begin{proof}
For any $(X,s=\ple{s_1,s_2}:S\rightarrowtail X\times X)\in\mathcal{R}_{\Sub}$, consider $\id{X}:(X,\Delta_X)\to(X,s)$ in $\mathcal{R}_{\Sub}$ and $Q(\id{X}):X\to Q(X,s)$ in $\mathbb{E}$. We prove that $Q(\id{X})$ is a regular epimorphism.

Note that given any $(X,r)\in\mathcal{R}_{\Sub}$, an $\mathcal{R}_{\Sub}$-equivalence relation on $(X,r)$ is an element $s$ in $(\Sub)_q((X,r)\times(X,r))=\des_{r\boxtimes r}\subseteq\Sub(X\times X)$ that is an equivalence relation on $X$ such that $r\leq s$. It follows that $\id{X}:(X,r)\to(X,s)$ is an arrow in $\mathcal{R}_{\Sub}$.

Moreover, it is an $\mathcal{R}_{\Sub}$-quotient of $s$: it is an arrow such that $s\leq(\id{}\times\id{})^*(s)=s$, and for every homomorphism $g:(X,r)\to(Y,u)$---i.e.\ $g:X\to Y$ such that $r\leq(g\times g)^*(u)$---such that $s\leq(g\times g)^*(u)$, we find a unique homomorphism $h:(X,r)\to(Y,u)$ such that $g=h\id{}$, indeed $h=g$, and it is an arrow in $\mathcal{R}_{\Sub}$ since $s\leq(g\times g)^*(u)$.
If we take $r=\Delta_X$, we have that $\id{X}:(X,\Delta_X)\to(X,s)$ is an $\mathcal{R}_{\Sub}$-quotient of $s$, hence $Q(\id{X}):X\to Q(X,s)$ is a $\Sub$-quotient of a $\Sub$-equivalence relation, hence it is a regular epimorphism.\end{proof}
Now we check commutativity of the triangle for the universal property:
\begin{equation*}\overline\theta_{FA}\big(\eta_{(H,\mathfrak h)}\big)_A=\overline\theta_{FA}\rho_{FA}\mu_A=\widehat\theta_{FA}\mu_A=\theta_A.\end{equation*}
To conclude, we show that $\overline\theta$ is unique. Suppose we have a 2-arrow $\lambda:(Q,\mathfrak{q})(\mathscr{L},\mathfrak{l})\to(K,\mathfrak k)$ making the triangle commute. In particular, for any object $A\in\CC$ we have $\lambda_{FA}\rho_{FA}\mu_A=\theta_A$ and for any $D\in\mathbb{D}, \gamma\in RD$ we have $\mathfrak{q}_{\mathscr{L}D}\mathfrak{l}_D(\gamma)\leq\lambda_D^*(\mathfrak{k}_D(\gamma))$. Consider the natural transformation $\lambda\circ\rho:\Lan_FH\xrightarrow{\cdot} K$. It is such that $(\lambda\circ\rho)\circ\mu=\theta$, so by the universal property of $\mu$ we have $\lambda\circ\rho=\widehat\theta$, but then we have $\widehat\theta_D=\lambda_D\rho_D=\overline\theta_D\rho_D$, so that $\lambda_D=\overline\theta_D$.

This concludes the proof of Claim \ref{claim:left_adj}, hence of Theorem \ref{thm:left_adj_prec}.

\section{Examples}
Before we give some examples, we prove a general result for some first-order theories. This generalizes Example 2.5.a of \cite{ElemQuotCompl}.

We refer to \cite{caramello} for the definitions about first-order calculus. We slightly change the doctrine of well-formed formulae of Example \ref{ex:doctr} considering just the fragment of Horn logic. Moreover, a theory in this context is not a set of closed formulae, but is instead a set of Horn sequents over $\Sigma$. We write in this case $\efbf^{\Sigma}_{\mathbb{T}}:\ctx_\Sigma\op\to\Pos$ for the elementary doctrine of Horn formulae: the base category is the same defined in Example \ref{ex:doctr}, each list of variable is sent to the poset reflection of Horn formulae---defined inductively as the smallest set containing relations, equalities, true constant and conjunctions of formulae---ordered by provable consequence in $\mathbb{T}$; reindexing are again defined as substitutions.
\begin{proposition}\label{prop:ed_mod}
Let ${\Sigma}$ be a first-order language and $\mathbb{T}$ a first-order theory in the language ${\Sigma}$ such that its axioms are Horn sequents. Then there exists an equivalence of categories:
\begin{equation*}\ED(\efbf^{\Sigma}_{\mathbb{T}},\Sub)\cong\Mod^{{\Sigma}}_{\mathbb T}\end{equation*}
where $\efbf^{\Sigma}_{\mathbb{T}}:\ctx_{\Sigma}\op\to\Pos$ is the elementary doctrine of Horn formulae in the language ${\Sigma}$ of the theory ${\mathbb T}$, $\Sub:\CC\op\to\Pos$ is the elementary doctrine of subobject for a given category $\CC$ with finite limits, and $\Mod^{\Sigma}_{\mathbb T}$ is the category whose objects are models of the theory $\mathbb{T}$ in the category $\CC$, and whose arrows are ${\Sigma}$-homomorphism.\end{proposition}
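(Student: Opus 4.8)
The plan is to establish the equivalence by building a functor $\Phi\colon\ED(\efbf^{\Sigma}_{\mathbb{T}},\Sub)\to\Mod^{\Sigma}_{\mathbb T}$ together with a quasi-inverse $\Psi$, reading off a $\Sigma$-structure from the data of a doctrine homomorphism and conversely. The starting observation is that in $\ctx_\Sigma$ every context is a finite power of the one-variable context $(x)$, so a finite-product-preserving functor $F\colon\ctx_\Sigma\to\CC$ is completely determined by the object $M=F((x))$ of $\CC$ and by the arrows $F(f)\colon M^n\to M$ obtained by applying $F$ to the term-arrow $f(x_1,\dots,x_n)\colon(x_1,\dots,x_n)\to(y)$ attached to each $n$-ary function symbol $f$. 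This is exactly the operational part of a $\Sigma$-structure on $M$. The natural transformation $\mathfrak{f}\colon\efbf^{\Sigma}_{\mathbb{T}}\xrightarrow{\cdot}\Sub\,F\op$ then provides, for each $n$-ary relation symbol $R$, a subobject $\mathfrak{f}_{(x_1,\dots,x_n)}\big(R(x_1,\dots,x_n)\big)\rightarrowtail M^n$, which is the interpretation of $R$; so I set $\Phi(F,\mathfrak{f})$ to be the $\Sigma$-structure with carrier $M$, operations $F(f)$ and relations $\mathfrak{f}(R)$.

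The core of the argument is that $\Phi$ lands among models of $\mathbb{T}$ and is reversible. Since each fiber of $\efbf^{\Sigma}_{\mathbb{T}}$ is a poset of Horn formulae ordered by provable consequence in $\mathbb{T}$, monotonicity of $\mathfrak{f}$ forces $\mathfrak{f}(\phi)\leq\mathfrak{f}(\psi)$ in $\Sub(M^n)$ for every axiom $\phi\vdash_{\vec x}\psi$ of $\mathbb{T}$, and, together with preservation of $\top$, this is precisely the statement that $M$ satisfies each axiom (equational axioms being recovered as the Horn sequents $\top\vdash_{\vec x}(s=t)$, so that although $\ctx_\Sigma$ depends on the language alone, the equational content of $\mathbb{T}$ is carried by $\mathfrak{f}$). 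For the converse functor $\Psi$, given a model $M$ I define $F$ on objects by $F((x_1,\dots,x_n))=M^n$ and on term-arrows by the usual interpretation of terms, and I define $\mathfrak{f}$ by interpreting Horn formulae as subobjects by structural induction: relations go to their interpretations, an equality to the relevant diagonal (an equalizer after pairing the interpreted terms), conjunctions to intersections, $\top$ to the maximal subobject, and substitution to pullback. The point that confines the statement to Horn theories is that this interpretation uses only finite limits, which $\CC$ possesses, so no images or unions are needed; one then checks that these clauses force $\mathfrak{f}$ to preserve finite meets, the top element and the fibered equality, and that its monotonicity with respect to provable consequence holds exactly because $M$ is a model, so that $\mathfrak{f}$ descends to the poset reflection.

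Next I would match 2-cells with $\Sigma$-homomorphisms. A 2-cell from $(F,\mathfrak{f})$ to $(G,\mathfrak{g})$ is a natural transformation $\theta$ satisfying $\mathfrak{f}_A(\alpha)\leq\Sub(\theta_A)(\mathfrak{g}_A(\alpha))$; naturality of $\theta$ against the product projections forces $\theta$ to be a power of its single component $\theta_{(x)}\colon M\to N$, and naturality against the function-symbol arrows then says exactly that $\theta_{(x)}$ commutes with the interpreted operations. Because $\mathfrak{f}$, $\mathfrak{g}$ and the reindexings preserve meets, top and equality, the displayed inequality need only be verified on atomic formulae; on equalities it is automatic, and on each relation symbol it says that $\theta_{(x)}$ carries $\mathfrak{f}(R)$ into $\mathfrak{g}(R)$. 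These are together the defining conditions for $\theta_{(x)}$ to be a $\Sigma$-homomorphism, and conversely any $\Sigma$-homomorphism extends uniquely to such a $\theta$ by product-preservation.

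Finally I would check that $\Phi$ and $\Psi$ are mutually quasi-inverse: passing through a $\Sigma$-structure and back returns the same carrier, operations and relations, up to the canonical isomorphisms $F\big((x)^n\big)\cong F((x))^n$, so both composites are naturally isomorphic to the identity essentially by construction. I expect the main obstacle to be the bookkeeping in the direction $\Psi$, namely verifying that the structural-induction interpretation of Horn formulae is well defined on the poset reflection by provable consequence and assembles into a single natural transformation preserving meets, top and equality. This is where the Horn hypothesis is indispensable, since Horn logic is precisely the fragment whose connectives---atomic relations, equality, finite conjunction and substitution---are interpretable with the finite-limit structure available in an arbitrary finitely complete $\CC$; thus the conceptual crux is the exact match between the Horn fragment and the elementary, that is finite-meet-plus-equality, structure, rather than any single hard computation.
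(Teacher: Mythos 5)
Your proposal is correct and takes essentially the same route as the paper: reading off the carrier and operations from the one-variable context, interpreting relation symbols via the natural transformation, identifying the doctrine morphism's value on a Horn formula with the standard categorical interpretation by structural induction (with the equality case handled through naturality against term-arrows and preservation of the fibered equality), and matching 2-cells with $\Sigma$-homomorphisms via product-preservation. The only difference is presentational—the paper organizes the argument as full, faithful and essentially surjective, while you package the very same constructions as an explicit quasi-inverse.
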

\begin{proof}
Since there is no confusion, we write $\efbf$ instead of $\efbf^{\Sigma}_{\mathbb{T}}$ and $\Mod$ instead of $\Mod^{\Sigma}_{\mathbb{T}}$.
For any given $\theta:(H,\mathfrak{h})\to(H',\mathfrak{h}')$ in $\ED(\efbf,\Sub)$, define $\theta_{(x)}:H(x)\to H'(x)$.

Observe that $H(x)$ is indeed a model of the theory $\mathbb{T}$: each $n$-ary function symbol $f$ in the language defines an arrow $f(x_1,\dots,x_n):(x_1,\dots,x_n)\to(x)$ in $\ctx$, hence its image through $H$---that preserves products---defines a map $f^H:\big(H(x)\big)^n\to H(x)$, which is the interpretation of $f$ in $H(x)$; each $n$-ary predicate symbol $R$ defines $R^H=\mathfrak{h}_{(x_1,\dots,x_n)}(R(x_1,\dots,x_n)):\dom(R^H)\rightarrowtail \big(H(x)\big)^n$. From now on, we write $\vec x$ instead of the list $(x_1,\dots,x_n)$.

Satisfiability of axioms follows by the fact that $\mathfrak{h}_{\vec x}(\alpha(\vec x))$ is the interpretation of $\alpha$ in $H(x)$ for each $\alpha(\vec x)\in\efbf(\vec x)$, and $\mathfrak{h}$ is monotone, so if we have an axiom $\alpha(\vec x)\vdash\beta(\vec x)$ in $\mathbb{T}$ we have $\alpha^H\leq\beta^H$, i.e.\ $H(x)$ satisfies $\alpha\vdash\beta$. To check that $\mathfrak{h}_{\vec x}(\alpha(\vec x))=\alpha^H$ we work recursively on the complexity of $\alpha$:
\begin{itemize}
\item $\alpha=\top$: $\alpha^H=\id{\big(H(x)\big)^n}=\mathfrak{h}_{\vec x}(\top)$ trivially holds;
\item $\alpha=R(\vec x)$: $\alpha^H=R^H=\mathfrak{h}_{\vec x}(R)$ by the definition given above;
\item $\alpha=\alpha_1\land\alpha_2$: $\alpha^H=\alpha^H_1\land\alpha^H_2=\mathfrak{h}_{\vec x}(\alpha_1)\land\mathfrak{h}_{\vec x}(\alpha_2)=\mathfrak{h}_{\vec x}(\alpha_1\land\alpha_2)$ since $\mathfrak{h}_{\vec x}$ preserves meets;
\item $\alpha=\big(t_1(\vec x)=t_2(\vec x)\big)$: $\alpha^H=\text{Eq}(t_1^H,t_2^H)$, the equalizer of the interpretations $t_1^H,t_2^H$ of the terms $t_1,t_2$ in $H(x)$.
\end{itemize}
It is left to prove then that if $\alpha=\big(t_1(\vec x)=t_2(\vec x)\big)$, we have $\alpha^H=\mathfrak{h}_{\vec{x}}\big(t_1(\vec{x})=t_2(\vec{x})\big)$. Naturality of $\mathfrak{h}$ with respect to the arrow $(t_1(\vec{x}),t_2(\vec{x})):\vec{x}\to(y_1,y_2)$ in $\ctx$, applied to the formula $\big(y_1=y_2\big)\in\efbf(y_1,y_2)$ gives:
\[\begin{tikzcd}
	{\efbf(y_1,y_2)} & {\Sub((H(x))^2)} \\
	{\efbf(\vec x)} & {\Sub((H(x))^n)}
	\arrow["{\vec t(\vec x)/\vec y}"', from=1-1, to=2-1]
	\arrow["{\mathfrak{h}_{\vec x}}"', from=2-1, to=2-2]
	\arrow["{\mathfrak{h}_{(y_1,y_2)}}", from=1-1, to=1-2]
	\arrow["{\ple{t_1^H,t_2^H}^*}", from=1-2, to=2-2]
\end{tikzcd}\]
in order to get:
\begin{equation*}\mathfrak{h}_{\vec{x}}\big(t_1(\vec{x})=t_2(\vec{x})\big)=\ple{t_1^H,t_2^H}^*(\Delta_{H(x)})=\text{Eq}(t_1^H,t_2^H),\end{equation*}
as claimed. This proves that the association $\ED(\efbf,\Sub)\to\Mod$ is well defined on objects.

Concerning arrows, first of all observe that since $H, H'$ preserve products and $\theta$ is a natural transformation, $\theta_{\vec{x}}=\theta_{(x)}\times\dots\times\theta_{(x)}$---$n$ times. So the naturality diagram of $\theta$ with respect to an arrow defined by an $n$-ary function symbol $f(\vec x):\vec x\to(x)$ gives the fact that $\theta_{(x)}$ preserves the interpretation of the function symbol $f$:
\begin{equation*}\theta_{(x)}f^H=f^{H'}\theta_{\vec{x}}=f^{H'}(\theta_{(x)}\times\dots\times\theta_{(x)});\end{equation*}
moreover, for any $n$-ary predicate symbol $R$, since $\theta$ is a $2$-arrow, we have
\begin{equation*}R^H=\mathfrak{h}_{\vec{x}}(R)\leq\theta_{\vec{x}}^{*}\big(\mathfrak{h}'_{\vec{x}}(R)\big)=\theta_{\vec{x}}^{*}R^{H'},\end{equation*}
so that $\theta_{(x)}$ is indeed a homomorphism in $\Mod$.

Now that the functor is well defined, we prove that it is full, faithful and essentially surjective. Faithfulness is trivial since, as seen above, each component of $\theta$ is uniquely determined by its component on the context with one variable.

Take now $g:H(x)\to H'(x)$ a homomorphism in $\Mod$, define $\theta^g_{\vec{x}}=g\times\dots\times g$---$|\vec x|$ times, where $|\vec x|$ is the length of the list. This defines a natural transformation $\theta:H\to H'$: naturality with respect to projections follows by definition, moreover for any function symbol the naturality square commutes since $g$ preserves interpretations, and then recursively since any other arrow is composition of projections and terms---defined by composition of function symbols---, naturality holds for any arrow in $\ctx$.
\[\begin{tikzcd}
	{(\vec x)} & {H(x)^{|\vec x|}} & {H'(x)^{|\vec x|}} \\
	{(x)} & {H(x)} & {H'(x)}
	\arrow["{t^H}"', from=1-2, to=2-2]
	\arrow["g"', from=2-2, to=2-3]
	\arrow["{g\times\dots\times g}", from=1-2, to=1-3]
	\arrow["{t^{H'}}", from=1-3, to=2-3]
	\arrow["{t(\vec x)}", from=1-1, to=2-1]
\end{tikzcd}\]
We show that $\theta^g$ is an arrow in $\ED(\efbf,\Sub)$, i.e.\ that for any $\alpha(\vec x)\in\efbf(\vec x)$ we have $\mathfrak{h}_{\vec{x}}\big(\alpha(\vec x)\big)\leq{\theta^g_{\vec{x}}}^{*}\big(\mathfrak{h}'_{\vec{x}}\big(\alpha(\vec x)\big)\big)$. Recursively on the complexity of $\alpha$ we observe that if $\alpha=\top$ or $\alpha=\beta\land\gamma$, the inequality holds since $\mathfrak{h},\mathfrak{h}'$ and ${\theta^g_{\vec{x}}}^{*}$ preserve the top element and meets; if $\alpha$ is an equality of terms $\alpha(\vec x)=\big(t_1(\vec{x})=t_2(\vec{x})\big)$, we show $\text{Eq}(t_1^H,t_2^H)\leq{\theta^g_{\vec{x}}}^{*}(\text{Eq}(t_1^{H'},t_2^{H'}))$, but this holds looking at the diagram below:
\[\begin{tikzcd}
	{\text{dom}(\alpha^{H})} \\
	\bullet & {\big(H(x)\big)^n} & {H(x)} \\
	{\text{dom}(\alpha^{H'})} & {\big(H'(x)\big)^n} & {H'(x)}
	\arrow["{\alpha^{H'}}"', tail, from=3-1, to=3-2]
	\arrow["{t_2^{H'}}"', shift right=1, from=3-2, to=3-3]
	\arrow["{t_1^{H'}}", shift left=1, from=3-2, to=3-3]
	\arrow["{\theta^g_{\vec{x}}}"', from=2-2, to=3-2]
	\arrow["{t_2^{H}}"', shift right=1, from=2-2, to=2-3]
	\arrow["g", from=2-3, to=3-3]
	\arrow["{t_1^{H}}", shift left=1, from=2-2, to=2-3]
	\arrow[from=2-1, to=3-1]
	\arrow["{{\theta^g_{\vec{x}}}^*(\alpha^{H'})}", tail, from=2-1, to=2-2]
	\arrow["\lrcorner"{anchor=center, pos=0.125}, draw=none, from=2-1, to=3-2]
	\arrow["{\alpha^{H}}", curve={height=-6pt}, tail, from=1-1, to=2-2]
	\arrow[curve={height=6pt}, dashed, from=1-1, to=2-1]
	\arrow[curve={height=18pt}, from=1-1, to=3-1]
\end{tikzcd}\]
the arrow $\dom(\alpha^H)\to\dom(\alpha^{H'})$ exists and makes the outer left square commute if and only if $t_1^{H'}\theta^g_{\vec{x}}\alpha^H=t_2^{H'}\theta^g_{\vec{x}}\alpha^H$, but this is true since $t_i^{H'}\theta^g_{\vec{x}}=gt_i^H$ for $i=1,2$. So the dashed arrow above exists by the universal property of the pullback, hence $\alpha^H\leq{\theta^g_{\vec{x}}}^*(\alpha^{H'})$, as claimed.
Finally, if $\alpha=R$ for some predicate symbol $R$, we have to check $R^H\leq {\theta^g_{\vec{x}}}^{*}(R^{H'})$, but this holds by definition of ${\Sigma}$-homomorphism.
So $\theta^g:(H,\mathfrak{h})\to(H',\mathfrak{h}')$ is well defined, and its image is $g$, so the functor is full.

To conclude, take $M$ a model of $\mathbb{T}$, and write $f^M:M^n\to M$ for the interpretation in $M$ of any $n$-ary function symbol $f$ in the language and $R^M:\dom(R^M)\rightarrowtail M^n$ for the interpretation of any $n$-ary predicate symbol $R$. We define a functor $H^M:\ctx\to\CC$ that maps $\vec x\mapsto M^{|\vec x|}$, projections in projections, $f(\vec x)\mapsto f^M$, and this trivially extends to lists of terms, defining a product preserving functor. Now define $\mathfrak{h}^M:\efbf\xrightarrow{\cdot}\Sub H\op$:
\begin{equation*}\mathfrak{h}^M_{\vec{x}}(\alpha(\vec x))=\alpha^M,\end{equation*}
the interpretation of $\alpha$ in $M$. It is well defined because $M$ is a model. By definition of interpretation, $\mathfrak{h}^M$ preserves top element, meet and fibered equality. To prove that it is a natural transformation, take a list of terms $\vec t(\vec x)=(t_1(\vec x),\dots,t_{|\vec y|}(\vec x)):\vec x\to\vec y$ and we prove that the following diagram is commutative:
\[\begin{tikzcd}
	{\efbf(\vec y)} & {\Sub(M^{|\vec y|})} \\
	{\efbf(\vec x)} & {\Sub(M^{|\vec x|})}
	\arrow["{\vec t(\vec x)/\vec y}"', from=1-1, to=2-1]
	\arrow["{\mathfrak{h}^M_{\vec{x}}}"', from=2-1, to=2-2]
	\arrow["{\mathfrak{h}^M_{\vec y}}", from=1-1, to=1-2]
	\arrow["{\ple{t_1^M,\dots,t_{|\vec y|}^M}^{*}}", from=1-2, to=2-2]
\end{tikzcd}\]
but this is true by definition of interpretation. Clearly $(H^M,\mathfrak{h}^M)\mapsto M$ so the functor is essentially surjective and defines the equivalence of categories.\end{proof}
\begin{example}\textbf{Some algebraic examples.}
We prove, using the equivalence of Proposition \ref{prop:ed_mod} in some specific theories, that many adjunction results in algebra can be obtained as a particular case of the adjunction shown in Theorem \ref{thm:left_adj_prec}.

Suppose we have an algebraic language ${\Sigma}$, and extend the language with some new function symbols to obtain a new algebraic language ${\Sigma}'$. Then suppose to extend the theory $\mathbb{T}$---which is a theory also in the language ${\Sigma}'$---with some new axioms of the form $\top\vdash \big(t(\vec x)=s(\vec x)\big)$, where $t$ and $s$ are terms in the language ${\Sigma}'$. Note that we could have ${\Sigma}={\Sigma}'$, so we can just extend the theory, or $\mathbb{T}=\mathbb{T}'$, so we just extend the language.
This extension can be translated in a homomorphism $(E,\mathfrak{e}):\efbf_{\mathbb{T}}^{{\Sigma}}\to\efbf^{{\Sigma'}}_{\mathbb{T}'}$ in $\ED$
\begin{center}
\begin{tikzcd}
\ctx_{{\Sigma}}\op\arrow[rr,"E\op"] \arrow[dr,"\efbf^{\Sigma}_{\mathbb{T}}"' ,""{name=L}]&&\ctx_{{\Sigma}'}\op\arrow[dl,"\efbf^{\Sigma'}_{{\mathbb{T}'}}" ,""'{name=R}]\\
&\Pos\arrow[rightarrow,"\mathfrak{e}","\cdot"', from=L, to=R, bend left=10]
\end{tikzcd}.
\end{center}
The functor $E$ is the inclusion of terms written in the language ${\Sigma}$ in the terms of the language ${\Sigma}'$; each component $\mathfrak{e}_{\vec{x}}$ of the natural transformation $\mathfrak{e}$ is the composition of the inclusion of $\efbf^{\Sigma}_{\mathbb{T}}(\vec x)$ in the poset $\efbf^{\Sigma'}_{\mathbb{T}}(\vec x)$ of Horn formulae in the extended language with respect to the same theory, with the quotient from $\efbf^{\Sigma'}_{\mathbb{T}}(\vec x)$ into $\efbf^{\Sigma'}_{\mathbb{T'}}(\vec x)$, that sends the equivalence class of a formula---with respect to reciprocal provability in the theory $\mathbb{T}$---to the equivalence class of the same formula, with respect to reciprocal provability in the theory $\mathbb{T}'$.
In any such extension, we have the following commutative diagram:
\[\begin{tikzcd}
	{\Mod^{\Sigma'}_{\mathbb T'}} & {\Mod^{\Sigma}_{\mathbb T}} \\
	{\ED(\efbf^{\Sigma'}_{\mathbb{T'}},\pws )} & {\ED(\efbf^{\Sigma}_{\mathbb{T}},\pws )}
	\arrow["\rotatebox{-90}{$\cong$}"{description}, draw=none, from=2-1, to=1-1]
	\arrow[from=1-1, to=1-2]
	\arrow["{\blank\circ(E,\mathfrak{e})}", from=2-1, to=2-2]
	\arrow["\rotatebox{-90}{$\cong$}"{description}, draw=none, from=2-2, to=1-2]
\end{tikzcd}\]
where $\pws :\Set_{\ast}\op\to\Pos$ is the elementary doctrine of subsets removing the empty set from the base category, and the arrow between the categories of models is the functor that forgets both the added structure from ${\Sigma'}$ that is not in ${\Sigma}$ and the axioms in $\mathbb{T'}$ that are not in $\mathbb{T}$.
So the left adjoint to the precomposition with $(E,\mathfrak e)$ described in the first section generalizes all such adjunctions in algebra.
Some examples include the adjunction between: sets and pointed sets, groups and abelian groups, monoids and semigroups, non-unitary rings and unitary rings, and so on.
\end{example}
\begin{example}\textbf{Extension and restriction of scalars.}
In a similar way, let $R,S$ be two commutative unitary rings, and let $a:R\to S$ be a ring homomorphism. One can obtain the category $R\mathbf{Mod}$ of modules over the ring $R$ as the category of models in the language ${\Sigma}=\{0, +, -\}\cup\{r\cdot\}_{r\in R}$---where $0$ is a constant, $+$ is a binary function symbol, and $-$ and each $r\cdot$ are unary function symbols---of the algebraic theory $\mathbb{T}$ with axioms making $\{0,+,-\}$ group operations, and $r\cdot$ the scalar multiplication with $r\in R$.

As seen above, we can define the equivalence $\ED(\efbf^{\Sigma}_{\mathbb{T}},\pws )\cong R\mathbf{Mod}$; similarly define the equivalence $\ED(\efbf^{\Sigma'}_{\mathbb{T'}},\pws )\cong S\mathbf{Mod}$. Here ${\Sigma'}$ and $\mathbb{T'}$ are not extension of ${\Sigma}$ and $\mathbb{T}$. However we can define a functor $E:\ctx_{\Sigma}\to\ctx_{\mathcal {L'}}$ that maps $0:()\to(x)$, $+:(x_1,x_2)\to(x)$,$-:(x)\to(x)$ in themselves, and each $r\cdot:(x)\to(x)$ in $a(r)\cdot:(x)\to(x)$; moreover define $\mathfrak{e}_{\vec{x}}:\efbf^{\Sigma}_{\mathbb{T}}(\vec x)\to\efbf^{\Sigma'}_{\mathbb{T'}}(\vec x)$, such that $\alpha(\vec x)\mapsto\alpha[a(r)/r](\vec x)$, meaning that each formula is sent essentially in itself, but each occurrence of $r$ in the terms that appear in $\alpha$ is substituted by $a(r)$, for every $r\in R$.

This function preserves trivially top element, meets and fibered equality, and defines a natural transformation.
The precomposition $\blank\circ(E,\mathfrak{e})$ recovers the adjunction between $R\mathbf{Mod}$ and $S\mathbf{Mod}$ given by extension and restriction of scalars.
\end{example}
\begin{example}\textbf{A multisorted example.}
Consider the two-sorted language ${\Sigma}$ and the theory $\mathbb{T}$ that describes sets with an action of a monoid over it. The proof of Proposition \ref{prop:ed_mod} was done in the single sorted case, but holds also in the multisorted setting. Then $\ED(\efbf^{\Sigma}_\mathbb{T},\pws )\cong \mathop{\mathrm{Mon}}\Set$; then extend the language and the theory to describe sets with an action of a group over it, so $\ED(\efbf^{\Sigma'}_\mathbb{T'},\pws )\cong \mathop{\mathrm{Grp}}\Set$. 

We can again recover the left adjoint to the forgetful functor: for a given $(M,X)$, where $M$ is a monoid acting on a set $X$, let $\mathcal{F}(M)$ be the free group generated by $M$. Define the equivalence relation $\sim$ on the product $\mathcal{F}(M)\times X$ generated by $(mn,x)\sim(m,n\cdot x)$ for any $m\in\mathcal{F}(M)$ and $n\in M$; the action of $\mathcal{F}(M)$ on $\mathcal{F}(M)\times X/\sim$ maps $(m,[(m',x)])$ into $[(mm',x)]$ for any $m,m'\in\mathcal{F}(M)$ and $x\in X$. The universal arrow is given by $(\eta_M,\iota_X):(M,X)\longrightarrow(\mathcal{F}(M),\mathcal{F}(M)\times X/\sim)$ where $\eta_M:M\to\mathcal{F}(M)$ is the inclusion of the monoid in the free group generated by it, and $\iota_X:X\to\mathcal{F}(M)\times X/\sim$ maps $x\in X$ to $[(e,x)]$, where $e$ is the identity of $M$.
\end{example}
\begin{example}\textbf{Some quasi-algebraic examples.}
Suppose we have an algebraic language ${\Sigma}$ and a quasi-algebraic theory $\mathbb{T}$, meaning that axioms can be quasi-identities---i.e.\ formulae of the form $\big(t_1(\vec x)=s_1(\vec x)\big)\land\dots\land\big(t_k(\vec x)=s_k(\vec x)\big)\vdash \big(t(\vec x)=s(\vec x)\big)$.
In this case we can recover, for example, the left adjoint to the forgetful functor between torsion-free $R\mathbf{Mod}$ and $R\mathbf{Mod}$, between cancellative semigropus and groups, between pseudocomplemented distributive lattices and boolean algebras.\end{example}
\begin{example}\textbf{Some non-algebraic example.}
Let ${\Sigma}$ be a first-order language with a binary relation $R$ and $\mathbb{T}$ a theory such that the only axiom in $\mathbb{T}$ are reflexivity, transitivity, symmetry or antisymmetry.

We can easily recover some adjunction by adding axioms---of the kind defined above---to the theory, in the same way we did for the algebraic case: for example we can find the left adjoint to the forgetful functor from the category of sets with an equivalence relation to the category of sets with a reflexive and symmetric relation, or from the category of sets with a preorder to the category of sets with an order, and so on.

A little more work must be done to recover the adjunction between the category of posets and inf-semilattices. Recall that any inf-semilattice is a poset defining that an element is smaller than another one if their meet is the first element, so there is a forgetful functor from inf-semilattices to posets. This forgetful functor arises again from a precomposition between the doctrines of Horn formulae: take the language ${\Sigma}$ with a binary predicate symbol, and a theory $\mathbb{T}$ with axioms of reflexivity, transitivity and antisymmetry; then take the algebraic language ${\Sigma'}$ with a constant symbol $\top$ and a binary function symbol $\sqcap$, and the algebraic theory $\mathbb{T'}$ that defines inf-semilattices. The functor $E:\ctx_{{\Sigma}}\to\ctx_{{\Sigma'}}$ maps projections in projections and is extended to lists of projections; $\mathfrak{e}_{\vec{x}}:\efbf^{\Sigma}_{\mathbb T}(\vec x)\to \efbf^{\Sigma'}_{\mathbb T'}(\vec x)$ is defined recursively: the top element, equalities of variables and conjunctions are sent to themselves, while the formula $R(x_i,x_j)$ is sent to the formula $\big(\sqcap(x_i,x_j)=x_i\big)$.\end{example}
\begin{example}\textbf{An example in $\Sub$.}
Consider $\Sub:\mathbb{E}\op\to\Pos$, where the base category $\mathbb{E}$ is a Grothendieck topos. Consider the empty language and the empty theory, then
\begin{equation*}\ED(\efbf,\Sub)\cong\mathbb{E}.\end{equation*} 
Extend the language with one constant symbol, so we have an equivalence of category
\begin{equation*}\ED(\efbf^{\{c\}},\Sub)\cong\mathbb{E}_{\bullet}\end{equation*} 
where $\{c\}$ is the language with one constant symbol, and $\mathbb{E}_{\bullet}$ is the category of pointed object, meaning that its objects are pairs $(A, a:\tmn\to A)$ where $A$ is an abject of $\mathbb{E}$, and arrows are homomorphism of $\mathbb{E}$ preserving the point.

So now we have the following commutative diagram:
\[\begin{tikzcd}
	{\mathbb{E}_\bullet} & {\mathbb{E}} \\
	{\ED(\efbf^{\{c\}},\Sub)} & {\ED(\efbf,\Sub)}
	\arrow["\rotatebox{-90}{$\cong$}"{description}, draw=none, from=2-1, to=1-1]
	\arrow[ from=1-1, to=1-2, "\mathcal U"]
	\arrow["{\blank\circ(E,\mathfrak{e})}", from=2-1, to=2-2]
	\arrow["\rotatebox{-90}{$\cong$}"{description}, draw=none, from=2-2, to=1-2]
\end{tikzcd}\]
where the upper arrow is the forgetful functor the leaves out the point, and the homomorphism $(E,\mathfrak{e}):\efbf\to\efbf^{\{c\}}$ is the usual arrow that arises from the extension of the empty language to the language with one constant symbol.
The left adjoint to $\mathcal{U}$ generalizes in a Grothendieck topos the classical adjoint that adds a new element to a set.\end{example}
\begin{example}\textbf{Adding an axiom.}
Consider an elementary doctrine $P$, take an element $\varphi\in P(\tmn)$. In \cite{guff2} a construction that adds $\varphi$ as an axiom is extensively studied. In particular there exists a doctrine $P_\varphi$, and an elementary doctrine homomorphism $(\id{},P(!)\varphi\land\blank):P\to P_\varphi$ such that $\varphi$ is interpreted as the top element in the fiber $P_\varphi(\tmn)$, and this homomorphism is universal with respect to this property, in the sense that every elementary homomorphism from $P$ that interprets $\varphi$ in the top element, factors (uniquely) through $(\id{},P(!)\varphi\land\blank)$---see Corollary 6.5 of \cite{guff2} for more details---.

So now take $(H,\mathfrak h):P\to \Sub$ and suppose that $\mathfrak{h}_\tmn(\varphi)=\top$. We observe that applying the left adjoint functor to $(H,\mathfrak h)$ we obtain exactly the unique $(H,\mathfrak h'):P_\varphi\to\Sub$ defined by the universal property of $(\id{},P(!)\varphi\land\blank):P\to P_\varphi$. Indeed, since the left Kan extension of $H$ along the identity is $H$ itself, it is enough to check that $\mathfrak{l}_{A}(\alpha)=\mathfrak h'(\alpha)$ for all $\alpha\in P_\varphi(A)$, so that $\mathscr{L}(A)=(HA,\mathfrak{l}_{A\times A}(\delta_A))=(HA,\Delta_{HA})$, and $Q\mathscr{L}(A)=HA$.
Consider the 1-arrow $(H,\mathfrak h'):P_\varphi\to\Sub$ and the identity 2-arrow $(H,\mathfrak h)\to(H,\mathfrak h')(\id{},P(!)\varphi\land\blank)$ along all the 1-arrows $(K,\mathfrak k):P_\varphi\to\Sub$ and the 2-arrows $\theta:(H,\mathfrak h)\to(K,\mathfrak k)(\id{},P(!)\varphi\land\blank)$. By definition of $\mathfrak l_A$ we obtain
\begin{equation*}\mathfrak l _A(\alpha)=\bigwedge_{(K,\mathfrak k),\theta}\widehat{\theta}_A^*(\mathfrak k _A(\alpha))\leq\mathfrak h'_A(\alpha).\end{equation*}
Conversely, compute
\begin{equation*}
\mathfrak{h}'_A(\alpha)=\mathfrak{h}'_A(P(!_A)\varphi\land\alpha)=\mathfrak h_A(\alpha)\leq\widehat{\theta}_A^*(\mathfrak k _A(P(!_A)\varphi\land\alpha))=\widehat{\theta}_A^*(\mathfrak k _A(\alpha))
\end{equation*}
hence $\mathfrak h'_A(\alpha)\leq\mathfrak l _A(\alpha)$.
\end{example}

\vskip 1cm
\bibliography{Biblio}

\begin{thebibliography}{Law69b}

\bibitem[Bor94]{borhandbk}
Francis Borceux.
\newblock {\em Handbook of categorical algebra: volume 1, Basic category
  theory}, volume~1.
\newblock Cambridge University Press, 1994.

\bibitem[Car17]{caramello}
Olivia Caramello.
\newblock {\em Theories, Sites, Toposes: Relating and Studying Mathematical
  Theories Through Topos-Theoretic 'Bridges'}.
\newblock Oxford, England: Oxford University Press UK, 2017.

\bibitem[EPR20]{EmPaRo}
Jacopo Emmenegger, Fabio Pasquali, and Giuseppe Rosolini.
\newblock Elementary doctrines as coalgebras.
\newblock {\em Journal of Pure and Applied Algebra}, 224, 2020.

\bibitem[Guf23]{guff2}
Francesca Guffanti.
\newblock Adding a constant and an axiom to a doctrine.
\newblock {\em arXiv:2310.08324}, 2023.

\bibitem[Joh02]{elephant}
Peter~T. Johnstone.
\newblock {\em {Sketches of an elephant: a Topos theory compendium}}.
\newblock Oxford logic guides. Oxford Univ. Press, New York, NY, 2002.

\bibitem[KL93]{kelly}
{G. M.} Kelly and Stephen Lack.
\newblock Finite-product-preserving functors, kan extensions, and
  strongly-finitary 2-monads.
\newblock {\em Applied Categorical Structures}, 1(1):85--94, March 1993.

\bibitem[KP12]{kan}
Panagis Karazeris and Grigoris Protsonis.
\newblock Left kan extensions preserving finite products.
\newblock {\em Journal of Pure and Applied Algebra}, 216(8):2014--2028, 2012.
\newblock Special Issue devoted to the International Conference in Category
  Theory `CT2010'.

\bibitem[Law69a]{adjfound}
F.~William Lawvere.
\newblock Adjointness in foundations.
\newblock {\em Dialectica}, 23(3/4):281--296, 1969.

\bibitem[Law69b]{lawdiag}
F.~William Lawvere.
\newblock Diagonal arguments and cartesian closed categories.
\newblock {\em Category theory, homology theory and their applications II},
  92:134--145, 1969.

\bibitem[Law70]{lawequality}
F.~William Lawvere.
\newblock Equality in hyperdoctrines and comprehension schema as an adjoint
  functor.
\newblock {\em Applications of Categorical Algebra}, 17:1--14, 1970.

\bibitem[Mac71]{CatWorMat}
Saunders MacLane.
\newblock {\em Categories for the Working Mathematician}.
\newblock Springer-Verlag, New York, 1971.
\newblock Graduate Texts in Mathematics, Vol. 5.

\bibitem[MPR17]{triposes}
Maria~Emilia Maietti, Fabio Pasquali, and Giuseppe Rosolini.
\newblock {Triposes, exact completions, and Hilbert's $\varepsilon$-operator}.
\newblock {\em Tbilisi Mathematical Journal}, 10(3):141 -- 166, 2017.

\bibitem[MR12]{ElemQuotCompl}
Maria Maietti and Giuseppe Rosolini.
\newblock Elementary quotient completion.
\newblock {\em Theory and Applications of Categories}, 27, 2012.

\bibitem[MR13]{quotcomplfoun}
Maria~Emilia Maietti and Giuseppe Rosolini.
\newblock Quotient completion for the foundation of constructive mathematics.
\newblock {\em Logica Universalis}, 7(3):371--402, 2013.

\bibitem[MR15]{unifying}
Maria~Emilia Maietti and Giuseppe Rosolini.
\newblock Unifying exact completions.
\newblock {\em Applied Categorical Structures}, 23:43--52, 2015.

\end{thebibliography}
\bibliographystyle{alpha}
\end{document}